\pgfplotsset{compat = 1.17}
\let\classAND\AND
\let\AND\relax
\let\AND\classAND
\DeclareMathOperator*{\argmin}{arg\,min}
\newcommand{\trans}{\ensuremath{\mkern-1.5mu\mathsf{T}}}
\newcommand{\frob}{\ensuremath{\mathsf{F}}}
\newcommand{\herm}{\ensuremath{\mathsf{H}}}
\DeclareMathOperator{\vecm}{vec}
\DeclareMathOperator{\mspan}{span}
\DeclareMathOperator{\trace}{tr}
\def\Sys{\mathcal{S}}
\def\Sysonlyc{\Sys_{1}}
\def\Sysonlycred{\Sys_{1,r}}
\def\Sysred{\Sys_r}
\def\wh#1{\widehat{#1}}
\active \gdef@{\mkern1mu}}
\def\CH{\mathcal{H}}
\def\CL{\mathcal{L}}
\def\CJ{\mathcal{J}}
\def\R{{\mathbbm R}}
\def\C{{\mathbbm C}}
\def\Crr{\C^{r\times r}}
\def\Rn{\R^n}
\def\Rm{\R^m}
\def\Rp{\R^p}
\def\Rr{\R^r}
\def\Rnn{\R^{n\times n}}
\def\Rnm{\R^{n\times m}}
\def\Rpn{\R^{p\times n}}
\def\Rpm{\R^{p\times m}}
\def\Rrr{\R^{r\times r}}
\def\Rpr{\R^{p\times r}}
\def\Rrm{\R^{r\times m}}
\def\Rnr{\R^{n\times r}}
\DeclareOldFontCommand{\bf}{\normalfont\bfseries}{\mathbf}
\def\Bzero{{\bf 0}}
\def\BA{{\bf A}}  \def\BAs{{\bf A}\kern-1pt}
\def\BB{{\bf B}} 
\def\BC{{\bf C}} \def\Bc{{\bf c}}
\def\BD{{\bf D}} 
\def\BF{{\bf F}} 
\def\BG{{\bf G}} 
\def\BH{{\bf H}} \def\Bh{{\bf h}}
\def\BI{{\bf I}}
\def\BM{{\bf M}}
\def\BP{{\bf P}}  \def\BPs{\BP\kern-1pt} 
\def\BQ{{\bf Q}} 
\def\BS{{\bf S}} 
\def\BT{{\bf T}} 
\def\BU{{\bf U}} \def\Bu{{\bf u}}
\def\BV{{\bf V}} \def\Bv{{\bf v}}
\def\BW{{\bf W}} 
\def\BX{{\bf X}} \def\Bx{{\bf x}}
\def\BY{{\bf Y}} \def\By{{\bf y}}
\def\BZ{{\bf Z}}
\def\BQtwok{\BQ_{2}^{(k)}}
\def\BZtwok{\BZ_{2}^{(k)}}
\def\BPr{{\bf P}_r} 
\def\BQr{{\bf Q}_r} 
\def\BQoner{{\bf Q}_{1,r}} 
\def\BAr{{\bf A}_r} 
\def\BBr{{\bf B}_r} 
\def\BCr{{\bf C}_r}
\def\BMr{{\bf M}_r} 
\def\Bxr{{\bf x}_r} 
\def\BVr{{\bf V}_r} \def\BWr{{\bf W}_r}
\def\yr{{y}_r}
\def\Byr{{\By}_r}
\def\Byone{{\By}_1}
\def\Byoner{{\By}_{1,r}}
\def\BMoner{{\bf M}_{1,r}} 
\def\BMkr{{\bf M}_{k,r}} 
\def\BMpr{{\bf M}_{p,r}} 
\def\BAe{{\bf A}_e} 
\def\BPe{{\bf P}_e} 
\def\BQe{{\bf Q}_e}
\def\BQonee{{\bf Q}_{1,e}} 
\def\BMe{{\bf M}_e}
\def\BMonee{{\bf M}_{1,e}} 
\def\BMke{{\bf M}_{k,e}} 
\def\BMpe{{\bf M}_{p,e}} 
\def\BBe{{\bf B}_e}  
\def\BCe{{\bf C}_e}  
\def\Bxe{{\bf x}_e}
\def\gradA{\nabla_{\BAr}\CJ}
\def\gradB{\nabla_{\BBr}\CJ}
\def\gradC{\nabla_{\BCr}\CJ}
\def\gradMk{\nabla_{\BMkr}\CJ}
\def\pertSys{\Delta_{\Sysred}}
\def\pertA{\Delta_{\BAr}}
\def\pertB{\Delta_{\BBr}}
\def\pertC{\Delta_{\BCr}}
\def\pertMk{\Delta_{\BMkr}}
\def\pertZ{\Delta_{\BZ}}
\def\pertX{\Delta_{\BX}}
\def\pertQ{\Delta_{\BQr}}
\def\pertP{\Delta_{\BPr}}
\newtheorem{theorem}{Theorem}[section]
\newtheorem{lemma}{Lemma}[section]
\newtheorem{remark}{Remark}[section]
\newtheorem{definition}{Definition}[section]
\newcommand{\MOR}{\ensuremath{\mathsf{MOR}\xspace}}
\newcommand{\ROM}{\ensuremath{\mathsf{ROM}\xspace}}
\newcommand{\FOM}{\ensuremath{\mathsf{FOM}\xspace}}
\newcommand{\LTI}{\ensuremath{\mathsf{LTI}\xspace}}
\newcommand{\LQO}{\ensuremath{\mathsf{LQO}\xspace}}
\newcommand{\QO}{\ensuremath{\mathsf{QO}\xspace}}
\newcommand{\IRKA}{\ensuremath{\mathsf{IRKA}\xspace}}
\newcommand{\FONC}{\ensuremath{\mathsf{FONC}\xspace}}
\newcommand{\TSIA}{\ensuremath{\mathsf{TSIA}\xspace}}
\newcommand{\MIMO}{\ensuremath{\mathsf{MIMO}\xspace}}
\newcommand{\BTr}{\ensuremath{\mathsf{BT}\xspace}}
\newcommand{\LQOTSIAdiag}{\ensuremath{\mathsf{LQO}\mbox{-}\mathsf{TSIA_{diag}}}}
\newcommand{\LQOTSIAtrunc}{\ensuremath{\mathsf{LQO}\mbox{-}\mathsf{TSIA_{trunc}}}}
\newcommand{\LQOTSIAeigs}{\ensuremath{\mathsf{LQO}\mbox{-}\mathsf{TSIA_{eigs}}}}
\newcommand{\LQOBT}{\ensuremath{\mathsf{LQO}\mbox{-}\mathsf{BT}}}
\newcommand{\relerr}{\ensuremath{\operatorname{relerr}}}
\definecolor{matlabblue}{HTML}{0072BD}
\definecolor{matlaborange}{HTML}{D95319}
\definecolor{matlabyellow}{HTML}{EDB120}
\definecolor{matlabpurple}{HTML}{7E2F8E}
\definecolor{matlabgreen}{HTML}{77AC30}
\definecolor{matlablightblue}{HTML}{4DBEEE}
\definecolor{matlabred}{HTML}{A2142F}
\tikzstyle{sline} = [
\newcommand{\plotfontsize}{\footnotesize}
\begin{document}
  

\title{$\CH_2$ optimal model reduction of linear systems with multiple quadratic outputs}

\author[$\dagger$]{Sean Reiter}
\affil[$\dagger$]{%
  Courant Institute of Mathematical Sciences, New York University,
  New York, NY 10012, USA.\authorcr
  \email{s.reiter@nyu.edu}, \orcid{0000-0002-7510-1530}
}

\author[$\star$]{Igor Pontes Duff}
\affil[$\star$]{%
  Max Planck Institute for Dynamics of Complex Technical Systems,
  Sandtorstr. 1, 39106 Magdeburg, Germany.\authorcr
  \email{pontes@mpi-magdeburg.mpg.de}, \orcid{0000-0001-6433-6142}
}
  
\author[$\ast$]{Ion Victor Gosea}
\affil[$\ast$]{
  Max Planck Institute for Dynamics of Complex Technical Systems,
  Sandtorstr. 1, 39106 Magdeburg, Germany.\authorcr
  \email{gosea@mpi-magdeburg.mpg.de}, \orcid{0000-0003-3580-4116}
}

\author[$\ddagger$]{Serkan Gugercin}
\affil[$\ddagger$]{%
  Department of Mathematics and Division of Computational Modeling and Data
  Analytics, Academy of Data Science, Virginia Tech,
  Blacksburg, VA 24061, USA.\authorcr
  \email{gugercin@vt.edu}, \orcid{0000-0003-4564-5999}
}

\shorttitle{$\CH_2$ optimal model reduction of linear quadratic output systems}
\shortauthor{S. Reiter, I. Pontes~Duff, I.~V. Gosea, S. Gugercin}
\shortdate{\today}
  
\keywords{%
linear quadratic output systems, $\CH_2$ optimal, model-order reduction, Gramians, two-sided iteration, optimality conditions, gradients
}

\msc{%
    15A24, 
    46N10, 
    49K15, 
    93A15, 
    93C10, 
    93C80  
}

\abstract{%
In this work, we consider the $\CH_2$-optimal model reduction of dynamical systems that are linear in the state equation with up to a quadratic nonlinearity in the output equation. As our primary theoretical contributions, we derive gradients of the squared $\CH_2$ system error with respect to the reduced model quantities and, from the stationary points of these gradients, introduce Gramian-based first-order necessary conditions for the $\CH_2$-optimal approximation of a linear quadratic output (\LQO{}) system. The resulting $\CH_2$-optimality framework generalizes the analogous Gramian-based optimality framework for purely linear systems. Computationally, we demonstrate how to enforce the necessary optimality conditions using the Petrov-Galerkin projection; the corresponding projection matrices are derived from a pair of Sylvester equations. Based on this result, we propose an iteratively corrected algorithm for the $\CH_2$-optimal model reduction of \LQO{} systems, which we refer to as the linear quadratic output two-sided iteration algorithm (\LQO{}-\TSIA{}). Numerical examples are included to illustrate the effectiveness of the proposed computational method against other existing approaches.
}

\novelty{%
}

\maketitle


\section{Introduction}
\label{sec:intro}
The focus of this work is a class of weakly nonlinear dynamical systems: those that are linear time-invariant in the state equation with up to quadratic terms in the output equation. We refer to these systems as \emph{linear quadratic-output} $\left(\LQO{}\right)$ systems.
Dynamical systems with quadratic-output functions appear naturally in applications where one is interested in observing or simulating response quantities computed as the product of time- or frequency-components of the state. 
Relevant examples of quadratic outputs include quantities pertaining to power or energy~\cite{HolNSU25, Pul23}, quadratic cost functionals in optimal control or design problems~\cite{DiazHGA23, YueM12, YueM13}, and observables that capture the variance or deviation of the state coordinates from a point of reference~\cite{VanVNLM12, AumW23, ReiW24, PulA19}.
Our interest in \LQO{} systems is primarily motivated by these applications where the dynamics of the problem are linear, but the quantities of interest are quadratic functions of the state.
In the interest of space, we refer the reader to~\cite[Sec.~2]{YueM12},~\cite[Sec.~2]{ReiW24}, or~\cite[Sec.~5.2]{Rei25} for a more comprehensive discussion of specific examples of systems with quadratic outputs.

The accurate modeling of complex physical phenomena often requires dynamical systems having a very large state-space dimension, e.g., on the order of $10^6$ or greater. 
In this large-scale setting, direct calculations involving the full-order model $(\FOM)$ are prohibitively costly, and system approximation becomes desirable.
\emph{Model order reduction} $(\MOR)$ is the procedure by which one replaces a large-scale dynamical system with a comparatively low-order surrogate model. The resultant \emph{reduced-order model} $(\ROM)$ should capture the significant input-to-output response characteristics of the original system in an appropriate sense, so that it can be used as a computationally efficient and faithful proxy for the \FOM{} in applications of, e.g., prediction, simulation, or control. 

In the \emph{purely} linear setting---that is, both the internal dynamics \emph{and} output equation depend only linearly on the state coordinates---there are a variety of \MOR{} techniques at one's disposal; we refer the reader to~\cite{Ant05, BenMS05, BenOCW17, AntBG20} and the collection of references therein for an overview of approximation methods for linear dynamical systems. 
In recent years, there has been an increased amount of study dedicated towards the \MOR{} of \LQO{} systems; see, e.g.,~\cite{VanM10, VanVNLM12, PulA19, GosA19, GosG20, BenGPD21, Pul23, DiazHGA23, PrzDGB24, SonZXUS24, ReiW24}.
Some of these works tackle the \LQO-\MOR{} problem by rewriting the governing equations of a multi-input, single-output \LQO{} system as a \emph{wholly linear} multi-input, multi-output system~\cite{VanM10, VanVNLM12}, or as a quadratic-bilinear system with a \emph{single} linear output~\cite{PulA19}.
By contrast, the works~\cite{BenGPD21, GosA19, GosG20, DiazHGA23, Pul23, SonZXUS24, PrzDGB24} devise approaches that leverage the quadratic-output structure \emph{directly}---that is, without any intermediate lifting or linearization---to determine suitable approximation subspaces for order reduction.
We highlight the work of Benner et al.~\cite{BenGPD21}, which proposes a novel algebraic observability Gramian, that we refer to as the \emph{quadratic-output observability Gramian}, and related $\CH_2$ system norm based on the Volterra kernels of an \LQO{} system. 
This theory leads to a balanced truncation model reduction algorithm based on the linear reachability and quadratic-output observability Gramians; extensions of this methodology to handle systems of differential-algebraic equations and frequency or time-limited model reduction were proposed in~\cite{PrzDGB24} and~\cite{SonZXUS24}, respectively. 
The works~\cite{GosA19, GosG20, DiazHGA23, ReiW24} consider model reduction based on the rational interpolation of transfer functions in the frequency domain.

It is proven in~\cite[Theorem~3.4]{BenGPD21} that the $\CH_2$ linear quadratic-output system error provides an \emph{a posteriori} upper bound on the $\CL_\infty$ output error; this bound holds for \emph{any} \LQO-\ROM{}, regardless of the computational strategy used to produce it.  
This is a natural motivator for using the $\CH_2$ model error as a design objective in \LQO-\MOR, particularly if one is interested in ensuring that the `worst case' error in the approximate output is uniformly small over all inputs.
Outside of~\cite{GosA19} and~\cite{BenGPD21}, which consider the $\CH_2$ norm as a performance measure but not explicitly as a design objective, the $\CH_2$-optimal model reduction of \LQO{} systems has not been considered in the current literature.
Specifically,~\cite{GosA19} proposes an algorithm heuristically motivated by \emph{linear} $\CH_2$-optimal model reduction, but without any explicit connection to the $\CH_2$-optimal model reduction of \LQO{} systems.

Motivated by the bound in~\cite[Theorem~3.4]{BenGPD21}, the focus of this work is on the structure-preserving and $\CH_2$-optimal model reduction of linear systems with quadratic-output functions.
By structure-preserving, we mean the $\CH_2$-optimal model reduction of an \LQO{} system by another, lower-order \LQO{} reduced model.
To our knowledge, this is the first time the $\CH_2$-optimal \LQO{} model reduction problem has been investigated in the literature.
For strictly linear systems, $\CH_2$-optimal model reduction is well studied; see, e.g.,~\cite{GugAB08, XuZ11, VanGPA08, MeiL67, Wil70, AntBG20, BauBF14}, and the references therein. 
The $\CH_2$-optimal model reduction problem has also been considered for other classes of weakly nonlinear dynamical systems, such as bilinear systems; see, e.g.,~\cite{ZhaL02, BenB12, FlaG15, BenGG18}.
Even in the linear setting, finding a global minimizer of the $\CH_2$ error is a non-convex optimization problem. 
As a consequence, the most common practice in the $\CH_2$ landscape typically relies on the identification of \ROM{}s that satisfy \emph{first-order necessary conditions} $(\FONC{}\mbox{s})$ for \emph{local} $\CH_2$ optimality. 
The two most well-known optimality frameworks for $\CH_2$-optimal model reduction of fully linear systems are derived from the interpolation-based \FONC{}s of Meier and Luenberger~\cite{MeiL67, GugAB08} and the Gramian-based \FONC{}s of Wilson~\cite{Wil70, VanGPA08}.
The major theoretical contribution that we present in this work is the establishment of \emph{Gramian-based} $\CH_2$-optimality conditions for the model reduction of \LQO{} systems.
Namely, our significant contributions are:
\begin{enumerate}
    \item \Cref{thm:gradients}, which derives gradients of the squared $\CH_2$ system error with respect to the state-space matrices of the \LQO-\ROM{} as parameters. 
    The stationary points of these gradients directly yield Gramian-based \FONC{}s for $\CH_2$ optimality, which are presented in~\Cref{thm:foncs_gramians}.
    These results constitute a set of structured $\CH_2$-optimality conditions for the \LQO-\MOR{} problem, and 
    generalize the analogous Gramian-based $\CH_2$-opti\-mality conditions in linear model reduction~\cite{Wil70, VanGPA08} to the \LQO{} setting.
    \item Also in~\Cref{thm:foncs_gramians}, we show that a $\CH_2$-optimal \LQO-\ROM{} is necessarily defined by Petrov-Galerkin projection. The relevant projection matrices are obtainable as solutions to a pair of Sylvester equations.
    \item Based on this theoretical optimality framework, we propose an iteratively corrected algorithm for $\CH_2$-opt\-imal \LQO-\MOR{} in~\Cref{alg:lqo_tsia}; we call this the linear quadratic-output two-sided iteration algorithm $(\LQO\-\mbox{-}\TSIA)$.
    At each iteration, the method obtains projection matrices from the aforementioned pair of Sylvester equations corresponding to the previous model iterate. 
    If the algorithm converges, the necessary $\CH_2$-optimality conditions of~\Cref{thm:foncs_gramians} will be satisfied by the outputted \LQO-\ROM{}.
\end{enumerate}

The particular organization of the manuscript is as follows: In~\Cref{sec:background}, we review the necessary systems theory background of \LQO{} systems and introduce several different characterizations of the $\CH_2$ system norm. These are central to the results developed in~\Cref{sec:h2_opt}, where we present the main theoretical contributions of this work. These are the derivation of gradients of the squared $\CH_2$ system error and resulting Gramian-based \FONC{}s for $\CH_2$ optimality in \LQO-\MOR.
\Cref{sec:algorithm} presents \LQO-\TSIA, an iterative computational algorithm for \LQO-\MOR{} based on the previously established $\CH_2$-optimality framework. 
To validate the proposed approach, in~\Cref{sec:numerics} we test our method on an example where a quadratic output naturally occurs from the discretization of a quadratic cost function.
\Cref{sec:conclusions} concludes the work and looks toward future research endeavors.

\section{Background and preliminaries}
\label{sec:background}
\subsection{Linear systems with quadratic outputs}
\label{ss:lqo_bg}

Throughout this work, we consider multi-input, multi-output $(\MIMO)$ dynamical systems that are linear in the state equation with a quadratic-output $(\QO)$ term. In its state-space formulation, such a system is described by the following 
equations
\begin{align}\label{eq:lqosys}
\Sys: \begin{cases}  \dot\Bx(t)=\BA\Bx(t)+\BB \Bu(t),\quad\quad \Bx(0)=\Bzero,\\[6pt]
\hspace{0.5mm} \By(t)=\underbrace{\BC\Bx(t)}_{\coloneqq\By_1(t)} + \underbrace{\begin{bmatrix}
    \Bx(t)^{\trans} \BM_1 \Bx(t)\\
    \vdots\\
    \Bx(t)^{\trans} \BM_p \Bx(t)
\end{bmatrix}}_{\coloneqq\By_2(t)},
\end{cases}
\end{align}
where $\BA,$ $\BM_1,\ldots,\BM_p \in \Rnn$, $\BB\in\Rnm$, and $\BC \in \Rpn$. In~\eqref{eq:lqosys} $\Bx\colon[0,\infty)\to\Rn$ contains the state coordinates; $\Bu\colon[0,\infty)\to\Rm$ and $\By\colon[0,\infty)\to\Rp$ are the inputs and outputs of the system, respectively. 
We assume henceforth that $\Sys$ is \emph{asymptotically stable}, i.e., the eigenvalues of $\BA$, denoted $\lambda(\BA)$, lie in the open left half of the complex plane, denoted $\C_-$.
Because $\BM_k$ can always be replaced by its symmetric part, we assume without loss of generality that $\BM_k=\BM_k^{\trans}$ for all $k=1,\ldots, p$.
We refer to a system of the form~\eqref{eq:lqosys} as an \LQO{} system, and represent such a system using the notation
$\left(\BA, \BB, \BC, \BM_1,\ldots,\BM_p\right).$
The \QO{} term $\By_2$ in~\eqref{eq:lqosys} can be re-written via a Kronecker product of the state
\begin{align}
\label{eq:kronqo}
    \By_2(t) =\BM \left(\Bx(t)\otimes \Bx(t)\right),\ \ \BM=\begin{bmatrix}\mbox{vec}\left(\BM_1\right)^{\trans} \\\vdots \\ \mbox{vec}\left(\BM_p\right)^{\trans}\end{bmatrix}\in\R^{p\times n^2},
\end{align}
where $\mbox{vec}\colon\Rnn\to\R^{n^2}$ denotes the \emph{vectorization} operator. For compactness and ease of theoretical development, we switch freely between these representations as necessary.
Depending on the application of interest, in~\eqref{eq:lqosys} we allow for $\BC=\Bzero_{p\times n}$, the $p \times n$ zero matrix, in which case the output equation is \emph{purely} quadratic.
On the other hand,
if $\BM_k=\Bzero_{n\times n}$ for all $k=1,\ldots,p$, the system~\eqref{eq:lqosys} reduces to a standard linear time-invariant (\LTI) system. We denote the system~\eqref{eq:lqosys} by $\Sysonlyc$ in this instance.

In practical applications, the state dimension $n$ of the system in~\eqref{eq:lqosys} may be large enough so that any repeated computation involving the \FOM{} is prohibitively expensive. 
\MOR{} seeks to replace the original large-scale model with a lower-dimensional \ROM{} that can be used as a cheap-to-evaluate surrogate in numerical simulation, devising control laws, solving optimization tasks, etc.
In this work, we consider the construction of reduced models that retain the \LQO{} structure, i.e.,
\begin{align}\label{eq:lqosys_red}
    \Sysred: \begin{cases}  \dot{\Bx}_r(t)=\BAr\Bxr(t)+\BBr \Bu(t),\\[6pt]
    \hspace{0.5mm} \By_r(t)=\BCr\Bxr(t) + \begin{bmatrix}
    \Bx(t)^{\trans} \BM_{1,r} \Bx(t)\\
    \vdots\\
    \Bx(t)^{\trans} \BM_{p,r} \Bx(t)
\end{bmatrix} ,
    \end{cases}
\end{align}
where the matrices $\BAr,$ $\BMoner,\ldots,\BMpr\in\Rrr$, $\BBr\in\Rrm,$ and $\BCr\in\Rpr$ are of a significantly reduced dimension $1\leq r \ll n$,
$\Bxr\colon[0,\infty)\to\Rr$ is the reduced state vector, and $\Byr\colon[0,\infty)\to\Rp$ is the approximate output.
We would like the surrogate model~\eqref{eq:lqosys_red} to be a good approximation in the sense that it accurately recreates the input-to-output response of the original full-order system in~\eqref{eq:lqosys}. 
In other words, the reduced output $\Byr$ should be a faithful replication of $\By$ in the sense that $\|\By-\Byr\|$ is small in an appropriate norm $\|\cdot\|$ over a range of admissible inputs $\Bu$.

The general framework we consider is that of model reduction using \emph{projection}.
Consider an \LQO{} system as in~\eqref{eq:lqosys}. In projection-based model reduction, the problem resolves to choosing left and right reduction bases $\BWr\in\Rnr$ and $\BVr\in \Rnr$ 
so that $\Bx\approx \BVr\Bxr$, the matrix $\BWr^{\trans}\BVr$ is nonsingular, and the Petrov-Galerkin condition
\begin{equation*}
    {\BW_r}^{\trans}\left({\BV_r}\dot{\Bx}_r(t) - \BA{\BV_r} \Bxr(t) - \BB @ \Bu(t)\right)=\Bzero
\end{equation*}
is satisfied.
The resulting order-$r$ \LQO{} reduced model of the form~\eqref{eq:lqosys_red} is determined by the reduced order matrices
\begin{equation}
\begin{alignedat}{2}
\label{eq:pg_proj}
    \BAr&= \left(\BWr^{\trans}\BVr\right)^{-1}\BW_r^{\trans}\BA
    {\BV_r} &&\in \R^{r\times r},\\
    \BBr &= \left(\BWr^{\trans}\BVr\right)^{-1}\BW_r^{\trans}\BB &&\in \R^{r\times m},\\
    \BCr &= \BC{\BV_r}&&\in \R^{p\times r},\\
    \mbox{and}~~\BMkr &=
    \BV_r^{\trans}\BM_k {\BV_r} &&\in \R^{r\times r}.
\end{alignedat}
\end{equation}
The approximation quality of the reduced model hinges upon the underlying projection subspaces $\mspan(\BWr)$ and $\mspan(\BVr)$, not the particular bases $\BWr$ and $\BVr$ used in~\eqref{eq:pg_proj}. Thus, $\BWr$ and $\BVr$ are typically replaced by orthonormal matrices to avoid ill-conditioning and singularities in the reduced model.
Note that $\BMkr$ is symmetric for each $k$ by construction.

\subsection{The $\CH_2$ system norm}
\label{ss:h2norm}
Here, we recall the relevant systems theory for \LQO{} systems from~\cite{BenGPD21}.
The nonlinearity in~\eqref{eq:lqosys} is entirely captured by the quadratic term in the output equation; the state equations evolve \emph{linearly} in $\Bx$.
Thus, the time-domain input-to-output map modeled by the \LQO{} system in~\eqref{eq:lqosys} is entirely characterized by \emph{two} Volterra kernels. 
For any input function $\Bu$ and $t \geq 0$, the corresponding output $\By$ of~\eqref{eq:lqosys} can be written as
\begin{align}
    \begin{split}
        \label{eq:lqosys_inputout}
        \By(t)&=\int_0^t {\Bh_1(\tau)}\Bu(t-\tau)\,d\tau \\
        + \int_0^t&\int_0^t {\Bh_2(\tau_1,\tau_2)}\left(\Bu(t-\tau_1) \otimes \Bu(t-\tau_2)\right)\,d\tau_1d\tau_2.
    \end{split}
\end{align}
The Volterra kernels $\Bh_1\colon[0,\infty)\to\Rpm$ and $\Bh_2\colon[0,\infty)\times [0,\infty)\ \to \R^{p\times m^2}$ are defined as
\begin{equation}
\label{eq:kernels}
        \Bh_1(t)\coloneqq\BC e^{\BA t}\BB,~~
        \Bh_2(t_1,t_2)\coloneqq\BM\left(e^{\BA t_1}\BB\otimes e^{\BA t_2}\BB\right),
\end{equation}
where $\BM\in\R^{p\times n^2}$ is defined according to~\eqref{eq:kronqo}.
Here, the univariate kernel $\Bh_1(t)$ and its convolution with $\Bu$ in~\eqref{eq:lqosys_inputout} describe the purely \emph{linear} term in the output; that is, $\By_1(t) = \BC\Bx(t)$. The bivariate kernel $\Bh_2(t_1,t_2)$ and its convolution with $\Bu\otimes \Bu$ in~\eqref{eq:lqosys_inputout} describe the purely \emph{quadratic} term in the output; i.e., $\By_2(t) = \BM\left(\Bx(t)\otimes\Bx(t)\right)$. 
This description of $\Sys$ can be directly obtained from the governing equations~\eqref{eq:lqosys}; see~\cite[Section~4]{BenGPD21} for a derivation.
From the Volterra kernels in~\eqref{eq:lqosys_inputout}, 
an $\CH_2$ inner product and corresponding norm for \LQO{} systems can be defined~\cite[Definition~3.1]{BenGPD21}. 

\begin{definition}
    \label{def:H2norm} 
    Let $\Sys$ and $\Sysred$ be asymptotically stable \LQO{} systems as in~\eqref{eq:lqosys} and~\eqref{eq:lqosys_red} having the kernels $\Bh_1(t)$, $\Bh_{1,r}(t)$ and $\Bh_2(t_1,t_2)$, $\Bh_{2,r}(t_1,t_2)$ defined according to~\eqref{eq:kernels}, respectively.
    The \emph{$\CH_2$ inner product} of $\Sys$ and $\Sysred$ is
    \begin{align}
    \begin{split}
        \label{eq:H2ip}
        \langle\Sys, \Sysred&\rangle_{\CH_2}\coloneqq \int_{0}^\infty \trace\left(\Bh_1(\tau)@ \Bh_{1,r}(\tau)^{\trans}\right) d\tau \\
        &+ \int_{0}^\infty \int_{0}^\infty \trace\left(\Bh_2(\tau_1,\tau_2)@ \Bh_{2,r}(\tau_1,\tau_2)^{\trans} \right) d\tau_1\, d\tau_2,
    \end{split}
    \end{align}
    where $\trace\left(\cdot\right)$ denotes the trace of a matrix.
    The \emph{$\CH_2$ norm} of $\Sys$ is likewise defined according to
    \begin{align}
    \begin{split}
        \label{eq:H2norm}
        \|\Sys\|_{\CH_2}^2\coloneqq& \int_{0}^\infty \|{\Bh_1(\tau)}\|_{\frob}^2 \,d\tau\\
        + \int_{0}^\infty&\int_{0}^\infty \|{\Bh_2(\tau_1,\tau_2)}\|_{\frob}^2\,d\tau_1\,d\tau_2,
    \end{split}
    \end{align}
    where $\|\cdot\|_{\frob}$ denotes the matrix Frobenius norm.
\end{definition}
If $\BM_k$ for $k=1,\ldots,p$ in~\eqref{eq:lqosys} are identically zero, then $\Sys=\Sysonlyc$ is purely an \LTI{} system with $\Bh_2=\Bzero_{p\times m^2}$, and so~\eqref{eq:H2ip} and~\eqref{eq:H2norm} agree with the usual $\CH_2$ inner product and norm defined for linear dynamical systems; cf.~\cite[Section~5.1]{Ant05}.
Lastly, it follows from \Cref{def:H2norm} that the $\CH_2$ norm of an asymptotically stable \LQO{} system is finite.

Next, we consider an alternative and more computationally tractable characterization of the $\CH_2$ system norm in~\Cref{def:H2norm} based on the \emph{Gramians} of an \LQO{} system.
These formulations will be paramount in deriving gradients of the squared $\CH_2$ system error and associated $\CH_2$-optimality conditions in~\Cref{sec:h2_opt}.

\subsubsection{The quadratic-output observability Gramian}

Because the nonlinearity of $\Sys$ in~\eqref{eq:lqosys} is limited to the output equation, its input-to-state map is identical to that of the related \LTI{} system $\Sysonlyc$ having the same state, input, and linear output matrices, $\BA\in\Rnn$, $\BB\in\Rn$, and $\BC\in\Rpn$, with $\BM_k=\Bzero_{n\times n}$ for all $k$.
Thus, the \emph{reachability Gramian} $\BP\in\Rnn$ of the \LQO{} system $\Sys$ is the same as the classical reachability Gramian defined for purely \LTI{} systems~\cite[Section~4.3]{Ant05}, i.e.,
\begin{align}
    \label{eq:reach_gram}
    \BP =\int_0^\infty e^{\BA \tau}\BB \left(e^{\BA \tau}\BB\right)^{\trans} \,d\tau.
\end{align}
Note that none of the output matrices $\BC$ or $\BM_k$ play a role in defining $\BP$.
Under the assumption that $\Sys$ is asymptotically stable, $\BP$ is unique~\cite[Prop.~6.2]{Ant05}, and can be computed as the solution of the Lyapunov equation
\begin{align}
    \label{eq:reach_lyap}
    \BA \BP + \BP \BA^{\trans} + \BB@\BB^{\trans}=\Bzero.
\end{align}
An algebraic \emph{quadratic-output observability Gramian} based on the nonlinear state-to-output map of an \LQO{} system was introduced in~\cite{BenGPD21}. 
In a sense, the \QO{} observability Gramian generalizes the classical observability Gramian of an \LTI{} system~\cite[Section~4.3]{Ant05}. 
Consider an \LQO{} system $\Sys$ as in~\eqref{eq:lqosys} and define the intermediate matrices $\BQ_1\in\Rnn$ and $\BQtwok\in\Rnn$ by
\begin{align}
    \label{eq:Q1}
    \BQ_1 &\coloneqq\int_0^\infty e^{\BA^{\trans} \tau}\BC^{\trans} \left(e^{\BA ^{\trans}\tau}\BC^{\trans}\right)^{\trans} \,d\tau\\
    \begin{split}
    \label{eq:Q2k}
    \mbox{and}~~\BQtwok&\coloneqq\int_0^\infty\int_0^\infty e^{\BA^{\trans} \tau_1}\BM_k e^{\BA \tau_2}\BB   \times \\
    & \ \ \ \ \ \ \left(e^{\BA^{\trans} \tau_1}\BM_k e^{\BA \tau_2}\BB\right)^{\trans} d\tau_1\,d\tau_2,
    \end{split}
\end{align}
for each $k=1,\ldots p$. 
(Note that $\BQ_1\in\Rnn$ is just the classical observability Gramian of the linear system $\Sysonlyc$.)
Then the \QO{} observability Gramian $\BQ\in \Rnn$ of $\Sys$ is defined as
\begin{align}
\begin{split}
    \label{eq:QO_obsv_gram}
    \BQ \coloneqq\BQ_1+\sum_{k=1}^p\BQtwok.
\end{split}
\end{align}
Akin to the linear system Gramians, it is proven in~\cite[Section~4.1]{BenGPD21} that if $\Sys$ is asymptotically stable, then $\BQ$ can be computed as the unique solution to the Lyapunov equation
\begin{align}
    \label{eq:QO_obsv_lyap}
    \BA^{\trans} \BQ + \BQ \BA + \BC^{\trans}\BC + \sum_{k=1}^p\BM_k \BP \BM_k=\Bzero, 
\end{align}
where $\BP$ is the reachability Gramian of $\Sys$ according to~\eqref{eq:reach_gram}. 
See~\cite[Section~4.1]{BenGPD21} for a detailed derivation of $\BQ$. 
Based on the energy functionals defined by the Gramians in~\eqref{eq:reach_gram} and~\eqref{eq:QO_obsv_gram}, a generalization of the balanced truncation model reduction algorithm for \LQO{} systems~\eqref{eq:lqosys} was proposed in~\cite{BenGPD21}.

It was proven in~\cite[Proposition~3.3]{BenGPD21} that the \QO{} observability Gramian $\BQ$ in~\eqref{eq:QO_obsv_gram} can be used to compute the $\CH_2$ norm~\eqref{eq:H2norm} of an \LQO{} system.
The $\CH_2$ inner product of two \LQO{} systems~\eqref{eq:H2ip} can similarly be computed using the solution to a pair of Sylvester equations.
Next, we provide a proof of this result for \LQO{} systems with multiple quadratic outputs and also show that the $\CH_2$ inner product and norm for \LQO{} systems in~\Cref{def:H2norm} can be calculated from the reachability Gramian $\BP$ in~\eqref{eq:reach_gram} and quadratic-output matrices $\BM_k$. 
The result~\cite[Prop.~3.3]{BenGPD21} proves the formulae~\eqref{eq:H2ip_Q} and~\eqref{eq:H2norm_Q} in~\Cref{thm:H2_from_gramians} for the special case of a single quadratic output with $\BC=\Bzero_{p\times n}$.
A related formula was independently proven in the recent work~\cite[Lemma~5.2]{PrzDGB24} for systems of differential-algebraic equations with quadratic-output functions.

\begin{theorem}
    \label{thm:H2_from_gramians}
    Let $\Sys$ and $\Sysred$ be asymptotically stable \LQO{} systems as in~\eqref{eq:lqosys} and~\eqref{eq:lqosys_red}, respectively. Let $\BX\in\Rnr$ and $\BZ\in\Rnr$ denote the unique solutions to the Sylvester equations
    \begin{align}
        \label{eq:sylveq_X}
        \BA\BX + \BX\BAr^{\trans} + \BB\BBr^{\trans}&=\Bzero\\ 
        \label{eq:sylveq_Z}
        \mbox{and}~~\BA^{\trans}\BZ + \BZ \BAr - \sum_{k=1}^p\BM_k\BX\BMkr - \BC^{\trans}\BCr&=\Bzero.
    \end{align}
    Then, the $\CH_2$ inner product of $\Sys$ and $\Sysred$ is given by
    \begin{align}
        \label{eq:H2ip_Q}
        \langle\Sys, \Sysred\rangle_{\CH_2}&= -\trace\left(\BB^{\trans}\BZ\BBr\right)\\  
        \label{eq:H2ip_P}
        &=\trace\left(\BC\BX\BCr^{\trans}\right)+\sum_{k=1}^p{\trace\left(\BX^{\trans}\BM_k\BX\BMkr\right)}. 
    \end{align}
    If $\Sysred=\Sys$, then $\BX = \BP\in\Rnn$ and $\BZ=\BQ\in\Rnn$ according to~\eqref{eq:reach_gram} and~\eqref{eq:QO_obsv_gram}. Thus, the $\CH_2$ norm of $\Sys$ is given by
    \begin{align}
        \nonumber
        \|\Sys\|_{\CH_2}^2 &=\trace\left(\BB^{\trans}\BQ_1\BB\right) + \sum_{k=1}^p \trace\left(\BB^{\trans}\BQtwok\BB\right)\\
        &=\trace\left(\BB^{\trans}\BQ\BB\right)        \label{eq:H2norm_Q}\\
        \label{eq:H2norm_P}
        &=\trace\left(\BC\BP\BC^{\trans}\right) + \sum_{k=1}^p\trace\left(\BP\BM_k\BP\BM_k\right).
    \end{align}
\end{theorem}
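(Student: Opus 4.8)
The plan is to evaluate the two integrals defining the $\CH_2$ inner product in~\eqref{eq:H2ip} directly from the kernel formulas~\eqref{eq:kernels}, and to tie the outcome to the Sylvester solutions $\BX$ and $\BZ$ through their integral representations. The key preliminary fact is that, because $\Sys$ and $\Sysred$ are asymptotically stable, the matrix-valued map $\tau\mapsto e^{\BA\tau}\BB\BBr^\trans e^{\BAr^\trans\tau}$ decays to zero, so integrating its derivative gives $\BA\big(\int_0^\infty e^{\BA\tau}\BB\BBr^\trans e^{\BAr^\trans\tau}\,d\tau\big) + \big(\int_0^\infty e^{\BA\tau}\BB\BBr^\trans e^{\BAr^\trans\tau}\,d\tau\big)\BAr^\trans = -\BB\BBr^\trans$. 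Comparing with~\eqref{eq:sylveq_X} and invoking uniqueness identifies $\BX = \int_0^\infty e^{\BA\tau}\BB\BBr^\trans e^{\BAr^\trans\tau}\,d\tau$, whence also $\BX^\trans = \int_0^\infty e^{\BAr\tau}\BBr\BB^\trans e^{\BA^\trans\tau}\,d\tau$; the same argument represents $\BZ$ as an integral against $\BC^\trans\BCr + \sum_{k=1}^p \BM_k\BX\BMkr$. These closed forms are the bridge between the time-domain integrals and the algebraic trace expressions.

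For the linear term I would substitute $\Bh_1(\tau) = \BC e^{\BA\tau}\BB$ and $\Bh_{1,r}(\tau)^\trans = \BBr^\trans e^{\BAr^\trans\tau}\BCr^\trans$, pull the constant factors $\BC$ and $\BCr^\trans$ through the integral, and recognize what remains as $\BX$, giving $\trace(\BC\BX\BCr^\trans)$ exactly as in the classical linear case.

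The quadratic term is the crux, and I expect it to be the main obstacle. Exploiting the row structure $\BM = [\vecm(\BM_1)^\trans; \cdots; \vecm(\BM_p)^\trans]$, I would expand $\trace(\Bh_2\,\Bh_{2,r}^\trans) = \sum_{k=1}^p [\Bh_2]_k [\Bh_{2,r}]_k^\trans$, where $[\Bh_2]_k$ is the $k$-th row, and apply the vectorization identity $\vecm(\BN)^\trans(G_1\otimes G_2) = \vecm(G_2^\trans\BN G_1)^\trans$ with $G_1 = e^{\BA\tau_1}\BB$ and $G_2 = e^{\BA\tau_2}\BB$ to rewrite the $k$-th row as $\vecm(\BB^\trans e^{\BA^\trans\tau_2}\BM_k e^{\BA\tau_1}\BB)^\trans$, and analogously for the reduced kernel. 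The identity $\vecm(\BP_1)^\trans\vecm(\BP_2) = \trace(\BP_1^\trans\BP_2)$ then turns each summand into the trace of a product of four exponentially weighted factors. The decisive move is to carry out the $\tau_2$-integration first, so that the inner block $\int_0^\infty e^{\BA\tau_2}\BB\BBr^\trans e^{\BAr^\trans\tau_2}\,d\tau_2$ collapses to $\BX$; integrating in $\tau_1$ and invoking the cyclic property with $\int_0^\infty e^{\BAr\tau_1}\BBr\BB^\trans e^{\BA^\trans\tau_1}\,d\tau_1 = \BX^\trans$ then produces $\sum_{k=1}^p \trace(\BX^\trans\BM_k\BX\BMkr)$. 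The delicate bookkeeping is twofold: the transposes generated by the Kronecker identity must be cleared using the symmetry $\BM_k = \BM_k^\trans$ from~\Cref{remark:M_symm}, and one must keep straight which exponential integral yields $\BX$ and which yields $\BX^\trans$. Adding the linear contribution establishes~\eqref{eq:H2ip_P}.

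To reach the dual expression~\eqref{eq:H2ip_Q}, I would substitute the integral representation of $\BZ$ --- equivalently, left-multiply~\eqref{eq:sylveq_Z} by $\BB^\trans$, right-multiply by $\BBr$, take the trace, and identify the resulting integrals as precisely the linear and quadratic pieces already computed --- showing that $-\trace(\BB^\trans\BZ\BBr)$ equals the right-hand side of~\eqref{eq:H2ip_P}. Finally, setting $\Sys = \Sysred$ reduces~\eqref{eq:sylveq_X} to the reachability Lyapunov equation~\eqref{eq:reach_lyap}, so $\BX = \BP$, and reduces~\eqref{eq:sylveq_Z} to the quadratic-output observability Lyapunov equation~\eqref{eq:QO_obsv_lyap}, identifying $\BZ$ with the observability Gramian $\BQ$; care with the sign convention is needed here, since the constant terms in~\eqref{eq:sylveq_Z} and~\eqref{eq:QO_obsv_lyap} must be matched consistently against the leading minus sign in~\eqref{eq:H2ip_Q}. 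Substituting these into~\eqref{eq:H2ip_Q}--\eqref{eq:H2ip_P}, using $\BP = \BP^\trans$, and splitting $\BQ = \BQ_1 + \sum_{k=1}^p\BQtwok$ per~\eqref{eq:QO_obsv_gram} then yields~\eqref{eq:H2norm_Q} and~\eqref{eq:H2norm_P}.
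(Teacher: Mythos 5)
Your proposal is correct, and all of its ingredients — the integral representations of the Sylvester solutions, the row structure of $\BM$ combined with the $\vecm$/Kronecker identities, and the collapse of the double integrals into $\BX$ and $\BX^{\trans}$ — are exactly the ones the paper uses. The difference is the direction of the argument. You prove \eqref{eq:H2ip_P} first, by evaluating the two time-domain integrals in \eqref{eq:H2ip} directly, and then deduce \eqref{eq:H2ip_Q} by pairing the two Sylvester equations \eqref{eq:sylveq_X} and \eqref{eq:sylveq_Z} (your bridging step is precisely the trace-swap identity that the paper only states later as \Cref{lemma:trace_form}, there invoked for the gradient computations). The paper runs the argument the other way: it decomposes $\BZ=\BZ_1+\sum_{k=1}^p\BZtwok$ through intermediate Sylvester equations, writes $\BZ$ as an explicit double integral, matches $-\trace\left(\BB^{\trans}\BZ\BBr\right)$ against the definition to obtain \eqref{eq:H2ip_Q}, and only then derives \eqref{eq:H2ip_P} by vectorizing the Sylvester equations and manipulating Kronecker products. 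The two organizations are of comparable difficulty; yours has the advantage that the $\BX$-based formula falls straight out of the definition without the detour through the decomposition of $\BZ$, while the paper's order puts \eqref{eq:H2ip_Q} — the formula used later for monitoring errors in \LQO-\TSIA — first. Finally, your caution about signs in the specialization $\Sysred=\Sys$ is warranted and resolves correctly: with the stated conventions, \eqref{eq:sylveq_Z} yields $\BZ=-\BQ$ rather than $+\BQ$, and it is the leading minus sign in \eqref{eq:H2ip_Q} that restores $\|\Sys\|_{\CH_2}^2=+\trace\left(\BB^{\trans}\BQ\BB\right)$; the theorem's phrase ``$\BZ=\BQ$'' glosses over this, so your attention to it is a point in your favor rather than a gap.
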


\begin{proof}
    Throughout this proof, take $\Bh_1(t)$, $\Bh_2(t_1,t_2)$ and $\Bh_{1,r}(t)$, $\Bh_{2,r}(t_1,t_2)$ to denote the Volterra kernels of $\Sys$ and $\Sysred$ respectively according to~\eqref{eq:kernels}.
    Consider the Sylvester equation
    \begin{align}
    \label{eq:sylveq_Z1}
        \BA^{\trans}\BZ_1+\BZ_1\BAr-\BC^{\trans}\BCr=\Bzero.
    \end{align}
    Because $\Sys$ and $\Sysred$ are asymptotically stable, the spectra of $\BA$ and $-\BAr$ are disjoint. Thus, the Sylvester equations~\eqref{eq:sylveq_X} and~\eqref{eq:sylveq_Z1} have unique solutions~\cite[Prop.~6.2]{Ant05}. These solutions can be explicitly written as
    \begin{align*}
        \BX &= \int_0^\infty e^{\BA\tau}\BB\left( e^{\BAr\tau}\BBr\right)^{\trans}@d\tau,\\
        \BZ_1&=-\int_0^\infty e^{\BA^{\trans} \tau}\BC^{\trans} \left(e^{\BAr^{\trans}  \tau}\BCr^{\trans}\right)^{\trans} @d\tau.
    \end{align*}
    By the same argument, for each $k=1,\ldots,p$, the Sylvester equation
    \begin{align}
        \label{eq:sylveq_Z2k}
        \BA^{\trans}\BZ_k+\BZ_k\BAr-\BM_k\BX\BMkr=\Bzero
    \end{align}
    has a unique solution of the form 
    \begin{align*}
        \BZtwok =-  \int_0^\infty e^{\BA^{\trans}\tau}\BM_k\BX\left(e^{\BAr^{\trans}\tau}\BMkr\right)^{\trans}@d\tau.
    \end{align*}
    Summing up the equations~\eqref{eq:sylveq_Z1}, and~\eqref{eq:sylveq_Z2k} over all $k$, yields~\eqref{eq:sylveq_Z}. By uniqueness, the solution $\BZ\in\Rnr$ to~\eqref{eq:sylveq_Z} is such that 
    \begin{equation*}
        \BZ=\BZ_1 + \sum_{k=1}^p\BZtwok.
    \end{equation*}
    Substituting the integral form of $\BX$ into $\BZtwok$ for each $k$ and moving terms inside the integrand, $\BZ$ can be expressed as
    \begin{align*}
        \BZ = -\int_0^\infty e^{\BA^{\trans} \tau}\BC^{\trans}& \left(e^{\BAr^{\trans}  \tau}\BCr^{\trans}\right)^{\trans} @d\tau\\
        - \sum_{k=1}^p\int_0^\infty &\int_0^\infty \left( e^{\BA^{\trans}\tau_1}\BM_k e^{\BA\tau_2}\BB\right)\times\\
        &\left( e^{\BAr^{\trans}\tau_1}\BMkr e^{\BAr\tau_2}\BBr\right)^{\trans}@d\tau_2@d\tau_1.
    \end{align*}
    Then, by the invariance of the trace under cyclic permutation, we have
    \begin{align*}
        \trace\left(\BB^{\trans}\BZ\BBr\right) = -\int_0^\infty \trace@\bigg(\BC e^{\BA \tau}\BB& \big(\BCr e^{\BAr  \tau}\BBr\big)^{\trans}\bigg) @d\tau\\
        - \sum_{k=1}^p\int_0^\infty\int_0^\infty \trace@\bigg( \BB^{\trans} e^{\BA^{\trans}\tau_1} & \BM_k e^{\BA\tau_2}\BB\times\\
        \big(\BBr^{\trans} e^{\BAr^{\trans}\tau_1}\BMkr e^{\BAr\tau_2}&\BBr\big)^{\trans}\bigg)@d\tau_2@d\tau_1.
    \end{align*}
    Note that the first term in the expression for $\trace\left(\BB^{\trans}\BZ\BBr\right)$ above is precisely the first term in the $\CH_2$ inner product~\eqref{eq:H2ip}.
    It remains to be shown that the remaining terms in the summand over $k$ equate to the second term in~\eqref{eq:H2ip}.
    To show this, we observe that by construction of $\BM$ in~\eqref{eq:kronqo} and properties of the Kronecker product~\cite{Bre78}, the bivariate kernel $\Bh_2(t_1,t_2) = \BM\left(e^{\BA t_1}\BB\otimes e^{\BA t_2}\BB\right)$ can be expressed as
    \begin{align*}
        \Bh_2(t_1,t_2)= \begin{bmatrix}
           \vecm\big( \BB^{\trans} e^{\BA^{\trans} t_1}\BM_1 e^{\BA t_2}\BB\big)^{\trans}\\
            \vdots\\
            \vecm\big(\BB^{\trans} e^{\BA^{\trans} t_1}\BM_p e^{\BA t_2}\BB\big)^{\trans}
        \end{bmatrix},
    \end{align*}
    and likewise for $\Bh_{2,r}(t_1,t_2)$.
    Thus, for all $t_1,t_2\geq 0$
    \begin{align*}
        \trace\left(\Bh_2(t_1,t_2)@ \Bh_{2,r}(t_1,t_2)^{\trans}\right)&\\
        =@\sum_{k=1}^p\trace\bigg(\BB^{\trans} e^{\BA^{\trans} t_1}\BM_k & e^{\BA t_2}\BB \big(\BBr^{\trans} e^{\BAr^{\trans} t_1}\BMkr e^{\BAr t_2}\BBr\big)^{\trans}\bigg),
    \end{align*}
    because the Frobenius inner product of two matrices equates to the vector inner product of their vectorized forms~\cite[Ch.~12.3]{GolV13}.
    Integrating both sides of the above equality yields
        \begin{align*}
        \int_0^\infty\int_0^\infty \trace\left(\Bh_2(\tau_1,\tau_2)^{\vphantom{\trans}}@ \Bh_{2,r}(\tau_1,\right.&\left.\tau_2)^{\trans}\right)\,d\tau_1\,d\tau_2\\
       = - \sum_{k=1}^p\int_0^\infty\int_0^\infty \trace@\left( \BB^{\trans} e^{\BA^{\trans}\tau_1} \right.& \BM_k e^{\BA\tau_2}\BB\times\\
       \big(\BBr e^{\BAr^{\trans}\tau_1}\BMkr e^{\BAr\tau_2}& \left.\BBr\big)^{\trans}\right)\,d\tau_2\,d\tau_1,
    \end{align*}
    proving that $\langle\Sys,@\Sysred\rangle_{\CH_2}=-\trace\left(\BB^{\trans}\BZ\BBr\right),$ as claimed. The formula for the $\CH_2$ norm in~\eqref{eq:H2norm_Q} follows directly by replacing $\Sys$ with $\Sysred$.
    
    The proof of~\eqref{eq:H2ip_P} now follows straightforwardly from~\eqref{eq:H2ip_Q}.
    First note that the matrix equations~\eqref{eq:sylveq_X} and~\eqref{eq:sylveq_Z} can be reformulated as equivalent linear systems
    \begin{align*}
        \left(\BI_n \otimes \BA + \BAr\otimes \BI_r\right) \vecm\left(\BX\right)&=-\vecm\left(\BB\BBr^{\trans}\right),\\
        \left(\BI_n \otimes \BA^{\trans} + \BAr^{\trans} \otimes \BI_r\right) \vecm\left(\BZ\right)&=\vecm\left(\BC^{\trans}\BCr\right)\\+\sum_{k=1}^p& \left(\BM\otimes\BMr\right)\vecm\left(\BX\right).
    \end{align*}
    Using the characterization of $\langle\Sys,@\Sysred\rangle_{\CH_2}$ in~\eqref{eq:H2ip_Q} as well as properties of the trace and Kronecker product, it follows that 
    \begin{align*}
          \langle\Sys,@\Sysred\rangle_{\CH_2}&=-\trace\left(\BB^{\trans}\BZ\BBr\right)
         ={-\vecm\left(\BB\BBr^{\trans}\right)^{\trans}\vecm\left(\BZ\right)}\\
         &=\vecm\left(\BX\right)^{\trans}\left(\BI_n \otimes \BA^{\trans} + \BAr^{\trans}\otimes \BI_r\right) \vecm\left(\BZ\right)\\
         &= \vecm\left(\BX\right)^{\trans}
        \left(\vecm\left(\BC^{\trans}\BCr\right)+
        \sum_{k=1}^p  \left(\BM\otimes\BMr\right)\vecm\left(\BX\right)\right)\\
        &=\trace\left(\BC\BX\BCr^{\trans}\right)+\sum_{k=1}^p\trace\left(\BX^{\trans}\BM_k\BX\BMkr\right).
     \end{align*}
     This proves~\eqref{eq:H2ip_P}; the analogous formula for the norm~\eqref{eq:H2norm_P} follows from replacing $\Sysred$ with $\Sys$.
\end{proof}

\subsubsection{Relating the $\CH_2$ system error and $\CL_\infty$ output error}
We take a moment here to motivate the choice of the $\CH_2$ norm as a performance metric in the model reduction of \LQO{} systems by revising the error bound from~\cite[Theorem~3.4]{BenGPD21}.
Throughout, we take $\CL_{2}^{n_1\times n_2}$ to denote the space of $n_1\times n_2$ matrix-valued functions $\BG\colon\Omega \to \R^{n_1\times n_2}$ that satisfy the square integrability condition
\begin{equation}
\label{eq:LpNorm}
    \|\BG\|_{\CL_2^{n_1\times n_2}}\coloneqq\left(\int_{\Omega}\|\BG\|_{\frob}^2@d@\omega\right)^{1/2}<\infty.
\end{equation}
In any case, $\Omega$ will either be $[0,\infty)$ or $[0,\infty)\times[0,\infty)$. If $n_2=1$, this is a vector-valued norm.
Consider the full- and reduced-order systems $\Sys$ and $\Sysred$ in~\eqref{eq:lqosys} and~\eqref{eq:lqosys_red}, respectively.
Recall that an effective surrogate model should replicate the full quantity of interest $\Byr\approx\By$ for a variety of external inputs $\Bu$.
Suppose that one wishes to design a reduced model so that the error due to the approximate output is uniformly small over all values $t>0$. In other words, the $\CL_\infty$ output error $\|\By-\Byr\|_{\CL^p_\infty}\coloneqq\sup_{t\geq0}\|\By(t)-\Byr(t)\|_\infty$
should be small for admissible $\Bu$.
Following~\eqref{eq:lqosys_inputout}, the output error at any time $t>0$ may be expressed as
\begin{align*}
    \By(t)-\Byr(t)
        &=\int_0^t \left(\Bh_1(\tau)-\Bh_{1,r}(\tau)\right)\Bu(t-\tau)\,d\tau \\
        +\int_0^t\int_0^t &\left(\Bh_2(\tau_1,\tau_2)- \Bh_{2,r}(\tau_1,\tau_2)\right)\times\\
        &~~~~~~\left(\Bu(t-\tau_1)\otimes\Bu(t-\tau_2) \right)\,d\tau_1\,d\tau_2.
\end{align*} 
Applying the vector norm $\|\cdot\|_\infty\colon\Rp\to\R$ to both sides of this equality, we obtain
\begin{align*}
    \|\By(t)-\Byr(t)\|_\infty
    &\leq \int_0^t \|\Bh_1(\tau)-\Bh_{1,r}(\tau)\|_{\frob}\|\Bu(t-\tau)\|_2@d\tau \\
    +\int_0^t\int_0^t &\|\Bh_2(\tau_1,\tau_2)- \Bh_{2,r}(\tau_1,\tau_2)\|_{\frob}\times\\
    &~~~~~~\|\Bu(t-\tau_1)\otimes\Bu(t-\tau_2)\|_2@d\tau_1@d\tau_2,
\end{align*}
where the inequality follows from first applying the integral triangle inequality, and subsequently from applying the inequalities $\|\Bv\|_\infty\leq \|\Bv\|_2$ and $\|\BH\Bv\|_2\leq\|\BH\|_2\|\Bv\|_2\leq\|\BH\|_{\frob}\|\Bv\|_2$ for any $\BH\in\C^{n_1\times n_2}$ and $\Bv\in\C^{n_2}$ to the integrands.
Because each integrand above is strictly nonnegative, we may take the upper integral limits $t\to\infty$. 
Applying the Cauchy-Schwarz inequality to the relevant $\CL_2$ inner products of the norms $\|\Bh_1-\Bh_{1,r}\|_{\frob}$ and $\|\Bu\|$, as well as $\|\Bh_2-\Bh_{2,r}\|_{\frob}$ and $\|\Bu\otimes \Bu\|_2$, allows us to simplify the bound further:
\begin{align*}
    \|\By(t)-\Byr(t)\|_\infty
    &\leq \|\Bh_1-\Bh_{1,r}\|_{\CL_2^{p\times m}}\|\Bu\|_{\CL^m_2} \\
    +\|\Bh_2&-\Bh_{2,r}\|_{\CL_2^{p\times m^2}}\|\Bu\otimes \Bu\|_{\CL^{m^2}_2},
\end{align*}
where the $\CL_2$ norms are defined as in~\eqref{eq:LpNorm}.
Applying the Cauchy-Schwarz inequality again, this time to the $2$-vectors containing the individual kernel errors, and squaring both sides, we retrieve a bound for any time $t>0$ involving the $\CH_2$ system error:
\begin{align*}
    \|\By(t)-\Byr(t)\|_\infty^2\leq& \bigg(\underbrace{\|\Bh_1-\Bh_{1,r}\|_{\CL_2^{p\times m}}^2 + \|\Bh_2-\Bh_{2,r}\|_{\CL_2^{p\times m^2}}^2}_{=\|\Sys-\Sysred\|_{\CH_2}^2}\bigg)\\
    &~~~~~\times \left(\|\Bu\|_{\CL^m_2}^2+\|\Bu\otimes \Bu\|_{\CL^{m^2}_2}^2\right).
\end{align*}
Because $t>0$ is arbitrarily specified, taking the supremum over all time reveals that the \emph{$\CL_\infty$ output error is bounded above by the $\CH_2$ linear quadratic-output system error}
\begin{align}
    \label{eq:H2bound}
    \|\By-\Byr\|_{\CL^p_\infty}^2 \leq \|\Sys-\Sysred\|_{\CH_2}^2 \left(\|\Bu\|_{\CL^m_2}^2 + \|\Bu\otimes \Bu\|_{\CL^{m^2}_2}^2\right).
\end{align}
We emphasize that the upper bound~\eqref{eq:H2bound} holds \emph{uniformly} in time.
Thus, for an admissible input $\Bu$, a small $\CH_2$ error guarantees that the reduced-model output $\By_r$ is a 
uniformly good approximation to the original output $\By$. 

The bound in~\eqref{eq:H2bound} motivates using the $\CH_2$ error as a performance measure in \LQO-\MOR{}. 
Indeed, if one wants the `worst case' error in the output $\|\By-\Byr\|_{\CL_\infty}$ to be small, then one should seek a reduced model $\Sysred$ so that the $\CH_2$ error $\|\Sys-\Sysred\|_{\CH_2}$ appearing in~\eqref{eq:H2bound} is as small as possible. 
Motivated by this, we consider the \emph{$\CH_2$-optimal model reduction problem for \textsf{LQO} systems}: Given the full-order asymptotically stable \LQO{} system $\Sys$ in~\eqref{eq:lqosys}, we seek a reduced-order, and also asymptotically stable, \LQO{} reduced model $\Sysred$ represented in~\eqref{eq:lqosys_red} such that the $\CH_2$ norm of the error system is \emph{minimized}:
\begin{align}
    \label{eq:H2opt_lqomor}
    \Sysred= \argmin_{\substack{\wh\Sys_r \mathrm{~ stable}\\ \dim(\wh\Sys_r)=r}} \CJ(\wh\Sys_r), \ \ \CJ(\wh\Sys_r)\coloneqq\|\Sys-\wh\Sys_r\|_{\CH_2}^2.
\end{align} 
The squared $\CH_2$ error in~\eqref{eq:H2opt_lqomor} is used solely for the ease of calculating gradients of $\CJ$ in~\Cref{sec:h2_opt}.
By the asymptotic stability assumption imposed on the full- and reduced-order models, the corresponding $\CH_2$ error is guaranteed to be finite. 
The minimization problem in~\eqref{eq:H2opt_lqomor} is, in general, non-convex, and the characterization of global minimizers is elusive.
Instead, we wish to identify \emph{local minimizers} of the $\CH_2$ error in~\eqref{eq:H2opt_lqomor} that satisfy \FONC{}s for $\CH_2$ optimality.

Before presenting our major theoretical results in the next section, we establish notation pertaining to gradients of functions defined over normed vector spaces; our presentation follows that of~\cite{Col12}.
Consider a Fr\'{e}chet differentiable function $f\colon U \to \R$ defined on an open subset $U$ of a Hilbert space $X$ endowed with the inner product $\langle\cdot,\cdot\rangle_X\colon X\times X\to\R$.
For any $x_0\in U$, the \emph{gradient} of $f$ at $x_0$ is the unique element $\nabla f (x_0)\in X$ so that
\begin{align}
 \label{eq:deriv_as_grad}
 f(x_0+h)=f(x_0) + \langle\nabla f (x_0),@ h\rangle_X + O(\|h\|_X^2),
\end{align}
for all $h$ in a neighborhood of zero.
We write $g(x)=O(\|h\|_X^2)$ if $\lim_{h\rightarrow 0}\frac{g(h)}{\|h\|_X}=0$.
If $\nabla f (x_0)=0$, we call $x_0\in X$ a \emph{critical} point of $f$. If $f$ has a local extremum at a point $x_0$, then necessarily $x_0$ is a critical point~\cite[Cor.~2.5]{Col12}.
For a multivariate function $f\colon X_1\times\ldots\times X_\ell\to\R$, partial gradients $\nabla_{x_i} f(x_1,\ldots,x_\ell)$ are defined analogously.

\section{Optimal $\CH_2$ model reduction}
\label{sec:h2_opt}
This section contains the main theoretical results of the paper.~\Cref{thm:gradients} presents gradients of the squared $\CH_2$ \LQO{} system error in~\eqref{eq:H2opt_lqomor} with respect to the reduced-order matrices of the \LQO-\ROM{} in~\eqref{eq:lqosys_red} as parameters. 
The stationary points of these gradients automatically yield Gramian-based \FONC{}s for the $\CH_2$-optimal model reduction of \LQO{} systems, which we present in~\Cref{thm:foncs_gramians}. 
These provide a set of structured $\CH_2$-optimality conditions for the \LQO-\MOR{} problem.
To set the stage for these results and make comparisons later on, we first review the Gramian-based $\CH_2$-optimality conditions for linear model reduction due to Wilson~\cite{Wil70}.

\subsection{Optimal $\CH_2$ model reduction of purely linear systems}
\label{ss:h2_opt_lti}
For the discussion in this subsection, we restrict our attention to \emph{purely} \LTI{} dynamical systems. Consider
\begin{align}\label{eq:ltisys}
\Sysonlyc: \begin{cases}  ~@@\dot\Bx(t)=\BA\Bx(t)+\BB \Bu(t),\quad \Bx(0)=\Bzero,\\
\Byone(t)=\BC\Bx(t).
\end{cases}
\end{align} 
The state, input, and output dimensions are the same as in~\eqref{eq:lqosys}.
Analogous to the \LQO{} model reduction problem, we seek a linear reduced model of the form
\begin{align}\label{eq:ltisys_red}
    \Sysonlycred: \begin{cases}  ~~@@\dot{\Bx}_r(t)=\BAr\Bxr(t)+\BBr \Bu(t),\quad \Bxr(0)=\Bzero,\\
    \hspace{0.5mm} \Byoner(t)=\BCr\Bxr(t).
    \end{cases}
\end{align}
It will be fruitful to view the \LTI{} system in~\eqref{eq:ltisys} as a special case of the \LQO{} system class~\eqref{eq:ltisys} having the realization $\left(\BA, \BB, \BC, \Bzero_{n\times n}, \ldots, \Bzero_{n\times n}\right)$.
With this perspective, much of the systems theory introduced in~\Cref{sec:background} reduces to the analogous \LTI{} systems theory.
The input-to-output map of the \LTI{} system $\Sysonlyc$ in~\eqref{eq:ltisys} is fully realized by the one-dimensional kernel $\Bh_1(t)=\BC e^{\BA t}\BB$, and $\Bh_2(t_1,t_2)=\Bzero_{p\times m^2}$ in this context.
As a consequence, the $\CH_2$ norm of $\Sysonlyc$ as given in~\Cref{def:H2norm} agrees with the linear $\CH_2$ system norm; cf.~\cite[Section~5.1]{Ant05}. 
The \QO{} observability Gramian of $\Sysonlyc$ becomes the classical {observability Gramian} of a linear system, that is, $\BQ_1\in\Rnn$ in~\eqref{eq:Q1}, since $\BQ_{2,k}=\Bzero_{n\times n}$ for all $k$.
The Lyapunov equation~\eqref{eq:QO_obsv_lyap} reduces to
\begin{align}
\label{eq:obsv_lyap}
    \BA^{\trans} \BQ_1 + \BQ_1 \BA + \BC^{\trans}\BC =\Bzero.
\end{align}
(When discussing the observability Gramian of a purely linear system $\Sysonlyc$, we keep the subscript to clearly differentiate from the general \QO{} observability Gramian in~\eqref{eq:QO_obsv_gram}.)
The reachability Gramian $\BP$ in~\eqref{eq:reach_gram} is unchanged.
A similar bound to~\eqref{eq:H2bound} can be derived; see~\cite[Sec.~2.1.1]{AntBG20}.
The Gramian-based framework for linear $\CH_2$-optimal model reduction is attributed to Wilson~\cite{Wil70} for multi-input, multi-output systems.
These results were later shown to be equivalent to interpolation-based optimality conditions~\cite{MeiL67, VanGPA08, GugAB08}.
The starting point for deriving the Gramian-based optimality conditions in~\cite{Wil70, VanGPA08} is expressing the $\CH_2$ norm as a function of the \emph{error system} $\Sysonlyc -\Sysonlycred$, which is itself an order-$(n+r)$ linear system of the form~\eqref{eq:ltisys}. A state-space realization of $\Sysonlyc -\Sysonlycred$ is given by $(\BAe,\BBe,\BCe)$, where $\BAe\in\R^{(n+r)\times(n+r)}$, $\BBe\in\R^{(n+r)\times m}$, and $\BCe\in\R^{p\times (n+r)}$ are given by
\begin{align}\label{eq:ltisys_error_sys}
    \BAe=\begin{bmatrix}
        \BA & \\
        & \BAr\\
    \end{bmatrix}, \ {\BBe = \begin{bmatrix}
        \BB\hphantom{_r} \\  \BBr 
    \end{bmatrix}},\ \BCe = \begin{bmatrix}
        \BC & -\BCr
    \end{bmatrix}.
\end{align}
The internal state and output of the error system are given by $\Bxe=\begin{bmatrix}
    \Bx^{\trans}&
    \Bxr^{\trans}
\end{bmatrix}^{\trans}$ and $\Byone-\Byoner$. Take $\BPe, \BQonee\in\R^{(n+r)\times(n+r)}$ to denote the reachability and observability Gramians of~\eqref{eq:ltisys_error_sys} which solve the Lyapunov equations~\eqref{eq:reach_lyap} and~\eqref{eq:obsv_lyap}.
These are expressed in $2\times 2$ block form as
\begin{align}
\label{eq:linear_err_gramians}
    \BPe=\begin{bmatrix}
        \BP & \BX@@@@@\\
        @@@@@\BX^{\trans} & \BPr
    \end{bmatrix}
    \quad\mbox{and}\quad    
    \BQonee=\begin{bmatrix}
        @@@@@\BQ_1 & \BZ_1@@@@@\\
        @@@@@\BZ_1^{\trans} & \BQoner
    \end{bmatrix},
\end{align}
where $\BP,\BQ_1\in\Rnn$ and $\BPr,\BQoner\in\Rrr$ are the reachability, observability Gramians of the full- and reduced-order linear models in~\eqref{eq:ltisys} and~\eqref{eq:ltisys_red}, respectively, while $\BX\in\Rnr$ and $\BZ_1\in\Rnr$ solve the matrix equations
\begin{align}
\begin{split}
\label{eq:linear_sylv}
    \BA\BX + \BX\BAr^{\trans} + \BB\BBr^{\trans}&=\Bzero,\\
    \mbox{and}~~\BA^{\trans}\BZ_1 + \BZ_1 \BAr -\BC^{\trans}\BCr&=\Bzero.
\end{split}
\end{align}
These equalities~\eqref{eq:linear_sylv} can be deduced by solving for $\BX$ and $\BZ_1$ directly from equations~\eqref{eq:reach_lyap} and~\eqref{eq:obsv_lyap} applied to the error system. 
Clearly, the $\CH_2$ norm of the error system~\eqref{eq:ltisys_error_sys} is precisely the approximation error induced by $\Sysonlycred$.
Applying~\Cref{thm:H2_from_gramians} to $\Sysonlyc-\Sysonlycred$, the squared $\CH_2$ system error can be written as
\[\CJ(\Sysonlycred)=\|\Sysonlyc-\Sysonlycred\|_{\CH_2}^2=\trace\left(\BBe^{\trans}\BQonee\BBe\right)=\trace\left(\BCe\BPe\BCe^{\trans}\right).\]
From this expression,~\cite[Theorem~3.3]{VanGPA08},~\cite{Wil70} show that gradients of $\CJ$ with respect to 
$\BAr$, $\BBr$, and $\BCr$  
are given by
\begin{align}
    \begin{split}
        \label{eq:gradients_lti}
        \gradA&=2\left(\BPr\BQoner+\BX^{\trans}\BZ_1\right),\\
        \gradB&=2\left(\BQoner\BBr+\BZ_1^{\trans}\BB\right),\\
        \gradC&=2\left(\BCr\BPr-\BC\BX\right).
    \end{split}
\end{align}
(We drop the dependence of $\CJ$ on $\Sysonlycred$ for convenience of notation.)
The matrices $\BX\in\Rnr$ and $\BZ_1\in\Rnr$ appearing in~\eqref{eq:gradients_lti} are those that satisfy~\eqref{eq:linear_sylv}.
If the reduced \LTI{} system $\Sysonlycred$ in~\eqref{eq:ltisys_red} is a local minimizer of the $\CH_2$ error, then the gradients in~\eqref{eq:gradients_lti} are identically zero, and the reduced order matrices $\BAr,\BBr,$ and $\BCr$ satisfy the identities: 
\begin{align}
\begin{split}
    \label{eq:wilson_lti}
       \Bzero&= \BPr\BQoner+ \BX^{\trans}\BZ_1,\\
       \Bzero &=\BQoner\BBr+ \BZ_1^{\trans}\BB,\\
        \Bzero &=\BCr\BPr-\BC\BX.
\end{split}
\end{align}
These are the \emph{Gramian-based}, or \emph{Wilson} conditions for the $\CH_2$-optimal model reduction of \LTI{} systems.
Under the assumption that $\BPr$ and $\BQoner$ are nonsingular, if $\Sysonlycred$ is $\CH_2$ optimal then it is realized in a Petrov-Galerkin framework~\cite{Wil70},~\cite[Theorem~3.4]{VanGPA08}, where the projection matrices in~\eqref{eq:pg_proj} are given by $ \BVr=\BX\BPr^{-1}$ and $\BWr=-\BZ_1\BQoner^{-1}.$
Evidently, $\BVr$ and $\BWr$ depend explicitly upon the $\CH_2$-optimal reduced model $\Sysonlycred$, which of course is not known \emph{a priori.}
To handle this issue, the \emph{two-sided iteration algorithm} $(\TSIA{})$ based on iterative projection using the solutions to the pair of Sylvester equations in~\eqref{eq:linear_sylv} was proposed in~\cite{XuZ11}. If \TSIA{} converges, it yields a \LTI{}-\ROM{} that satisfies~\eqref{eq:wilson_lti}.
Practical improvements that make \TSIA{} more computationally efficient were considered in~\cite{BenKS11}.

\subsection{Optimal $\CH_2$ model reduction of \textsf{LQO} systems}

We return our attention to the $\CH_2$-optimal model reduction problem for \LQO{} systems described in~\Cref{ss:h2norm}.
Recall the minimization problem~\eqref{eq:H2opt_lqomor}, and the associated cost function
\[\CJ(\Sysred)=\CJ \left(\BAr,\BBr,\BCr,\BMoner,\ldots,\BMkr\right)=\|\Sys-\Sysred\|_{\CH_2}^2.\]
We view the objective function $\CJ\colon\Rrr\times\Rrm\times\Rpn\times\cdots\times\Rrr\to\R$ as taking the reduced-order matrices that determine $\Sysred$ in~\eqref{eq:lqosys_red} as arguments.
That is, $\CJ$ is a multivariate function defined over a real-valued Hilbert space of matrices in each variable.
Per~\cite[Cor.~2.5]{Col12}, if the reduced \LQO{} system $\Sysred$ in~\eqref{eq:lqosys_red} having the realization $$\left(\BAr,\BBr,\BCr,\BMoner,\ldots,\BMkr\right)$$ is a local minimizer of the $\CH_2$ error, then the partial gradients of $\CJ$ with respect to the reduced-order matrices $\BAr$, $\BBr$, $\BCr$, and $\BMkr$ are necessarily zero. 
Thus, computing gradients of $\CJ$ in~\eqref{eq:H2opt_lqomor} and setting them to zero paves the way for developing the Wilson optimality conditions in the \LQO{} setting.

Following suit with the linear problem, our starting point for computing gradients of $\CJ$ is expressing the objective function in terms of the error system $\Sys-\Sysred$. The matrices $\left(\BAe,\BBe,\BCe,\BMonee,\ldots,\BMpe\right)$, where $\BAe\in\R^{(n+r)\times(n+r)}$, $\BBe\in\R^{(n+r)\times m}$, $\BCe\in\R^{p\times (n+r)}$ and $\BMke\in\R^{(n+r)\times(n+r)}$ are defined as
\begin{align}
\begin{split}
\label{eq:lqosys_error_sys}
    \BAe&=\begin{bmatrix}
        \BA & \\
        & \BAr\\
    \end{bmatrix}, \ {\BBe = \begin{bmatrix}
        \BB\hphantom{_r} \\ \BBr
    \end{bmatrix}},\ \BCe = \begin{bmatrix}
        \BC & -\BCr
    \end{bmatrix},\\
    \BMke&=\begin{bmatrix}
    \BM_k & \\ & -\BMkr
\end{bmatrix}, \ \mbox{for each} \ k=1,\ldots p,
\end{split}
\end{align}
constitute a state-space realization as in~\eqref{eq:lqosys} of the \LQO{} error system.
The reachability and \QO{} observability Gramians $\BPe,$ $\BQe\in\R^{(n+r)\times(n+r)}$ of the error system uniquely satisfy the Lyapunov equations~\eqref{eq:reach_lyap} and~\eqref{eq:QO_obsv_lyap} for the representation in~\eqref{eq:lqosys_error_sys}, i.e.,
\begin{align}
\begin{split}
\label{eq:lqosys_err_lyap}
    \BA^{\trans} \BQe + \BQe \BAe + \BCe^{\trans}\BCe + \sum_{k=1}^p\BMe \BPe \BMe&={\Bzero}, \\
    \BAe \BPe + \BPe \BAe^{\trans} + \BBe \BBe^{\trans} &=\Bzero.
\end{split}
\end{align}
The Gramians $\BPe$ and $\BQe$ have the particular structure
\begin{align}
    \label{eq:lqosys_err_gramians}
    \BPe=\begin{bmatrix}
        \BP & \BX@@@@@\\
        @@@@@\BX^{\trans} & \BPr
    \end{bmatrix}\quad\mbox{and}\quad \BQe=\begin{bmatrix}
        \BQ & \BZ@@@@@\\
        @@@@@\BZ^{\trans} & \BQr
    \end{bmatrix},
\end{align}
where $\BP,\BQ\in\Rnn$ are the full-order system Gramians, and the submatrices $\BPr,\BQr\in\Rrr$ and $\BX,\BZ\in\Rnr$  are the unique solutions to the matrix equations
\begin{subequations}
\begin{align}
\label{eq:constraint_lyap_Pr}
\BAr\BPr + \BPr\BAr^{\trans} + \BBr\BBr^{\trans}&={\Bzero},\\
\label{eq:constraint_lyap_Qr}
\BAr^{\trans}\BQr + \BQr \BAr + \sum_{k=1}^p\BMkr\BPr\BMkr + \BCr^{\trans}\BCr&={\Bzero},\\
\label{eq:constraint_sylv_X}
\BA\BX + \BX\BAr^{\trans} + \BB\BBr^{\trans}&={\Bzero},\\
\label{eq:constraint_sylv_Z}
\BA^{\trans}\BZ + \BZ \BAr - \sum_{k=1}^p\BM_k\BX\BMkr - \BC^{\trans}\BCr&={\Bzero}.
\end{align}
\end{subequations}
Note that the matrices $\BPr$ and $\BQr$ are symmetric because they are the reachability and \QO{} observability Gramians of the reduced model $\Sysred$.
In addition to the constraints in~\eqref{eq:constraint_lyap_Pr}--\eqref{eq:constraint_sylv_Z}, it will be useful to recall the matrices $\BZ_1\in\Rnr$ and $\BQoner\in\Rrr$ in~\eqref{eq:linear_err_gramians} that satisfy
\begin{subequations}
    \begin{align}
        \label{eq:constraint_LTI_lyap_Qr}
        \BAr^{\trans}\BQoner + \BQoner\BAr +\BCr^{\trans}\BCr&=\Bzero,\\
        \label{eq:constraint_LTI_sylv_Z}
        \BA^{\trans}\BZ_1 + \BZ_1 \BAr -\BC^{\trans}\BCr&=\Bzero.
    \end{align}
\end{subequations} 
We emphasize, however, that~\eqref{eq:constraint_lyap_Qr} and~\eqref{eq:constraint_sylv_Z} are \emph{distinct} from~\eqref{eq:constraint_LTI_lyap_Qr} and~\eqref{eq:constraint_LTI_sylv_Z} when $\BM_k\neq \Bzero_{n\times n}$, with different solutions $\BQr\neq\BQoner$ and $\BZ\neq \BZ_1$.
Applying the result of~\Cref{thm:H2_from_gramians} to the realization in~\eqref{eq:lqosys_error_sys}, the squared $\CH_2$ error in \LQO-\MOR{} can be expressed as
\begin{align}
\begin{split}
\label{eq:H2err_from_err_grams}
    \CJ(\Sysred) &= \trace\left(\BBe^{\trans}\BQe\BBe\right),\\
     &= \trace\left(\BCe\BPe\BCe^{\trans}\right) + \sum_{k=1}^p\trace\left(\BPe\BMke\BPe\BMke\right).
\end{split}
\end{align}
Basic algebraic manipulations of the trace in~\eqref{eq:H2err_from_err_grams} reveal that the squared $\CH_2$ error can be re-written as
\begin{align}
\label{eq:CJ_tracechar_Q}
\CJ(\Sysred)=\trace\left(\BB^{\trans}\BQ\BB+2\BB^{\trans}\BZ\BBr+\BBr^{\trans}\BQr\BBr\right),
\end{align}
or equivalently
\begin{align}
\begin{split}
\label{eq:CJ_tracechar_P}
    \CJ(\Sysred) &=\trace\left(\BC\BP\BC^{\trans}-2\BC\BX\BCr^{\trans}+\BCr\BPr\BCr^{\trans}\right)\\
   &~~~~+\sum_{k=1}^p\trace\big(\BP\BM_k\BP\BM_k\big)\\
   &~~~~+\trace\left(\BPr\BMkr\BPr\BMkr- 2\BX^{\trans}\BM_k\BX\BMkr\right),
\end{split}
\end{align}
where $\BPr,\BQr,\BQoner\in\Rrr$ and $\BX,\BZ,\BZ_1\in\Rnr$ satisfy the constraints in~\eqref{eq:constraint_lyap_Pr}---\eqref{eq:constraint_LTI_sylv_Z}.

This brings us to our first major result, which we present in~\Cref{thm:gradients}. We leverage the formulations in~\eqref{eq:CJ_tracechar_Q} and \eqref{eq:CJ_tracechar_P} in order to establish partial gradients of the squared $\CH_2$ error $\CJ(\Sysred)=\|\Sys-\Sysred\|_{\CH_2}^2$ in \LQO-\MOR{} with respect to the reduced-order matrices $\BAr,$ $\BBr,$ $\BCr$, and $\BMkr$ for all $k=1,\ldots p$.

\begin{theorem}
    \label{thm:gradients}
    Let $\Sys$ and $\Sysred$ be asymptotically stable \LQO{} systems as in~\eqref{eq:lqosys} and~\eqref{eq:lqosys_red}, respectively. 
    Then the gradients $\gradA$, $\gradB$, $\gradC$, $\gradMk$, $k=1,\ldots, p$ of the squared $\CH_2$ error $\CJ(\Sysred)=\|\Sys-\Sysred\|_{\CH_2}^2$ are given explicitly as
    \begin{subequations}  \label{eq:gradAll}
     \begin{align}
        \label{eq:gradA}
        \gradA &= 2 \left(\left(2\BQr - \BQoner\right)\BPr +  \left(2\BZ^{\trans}-\BZ_1^{\trans}\right)\BX\right)\\
        \label{eq:gradB}
        \gradB &=2 \left(\left(2\BQr - \BQoner\right)\BBr +  \left(2\BZ^{\trans}-\BZ_1^{\trans}\right)\BB\right)\\
        \label{eq:gradC}
        \gradC &=2\left(\BCr\BPr-\BC\BX\right)\\
        \label{eq:gradM}
        \gradMk &=2\left(\BPr\BMkr\BPr-\BX^{\trans}\BM_k\BX\right), \ k = 1,\ldots, p, 
    \end{align}
  \end{subequations}
where $\BX,\BZ\in\Rnr$ and $\BPr,\BQr\in\Rrr$ satisfy the constraint equations~\eqref{eq:constraint_lyap_Pr}---\eqref{eq:constraint_sylv_Z}, and $\BZ_1\in\Rnr$, $\BQ_1\in\Rrr$ satisfy~\eqref{eq:constraint_LTI_lyap_Qr} and~\eqref{eq:constraint_LTI_sylv_Z}, respectively.
\end{theorem}

\begin{proof}
    The calculations required to prove~\Cref{thm:gradients} are conceptually intuitive, but technical. Hence, the complete proof is presented in the Appendix.
\end{proof}

\Cref{thm:gradients} now yields~\Cref{thm:foncs_gramians}, which contains our second significant theoretical contribution: The stationary points of the gradients of $\CJ$ in~\Cref{thm:gradients} characterize \FONC{}s for the $\CH_2$-optimal approximation of \LQO{} systems, and a $\CH_2$-optimal \LQO-\ROM{} is necessarily determined by Petrov-Galerkin projection~\eqref{eq:pg_proj}.
This result generalizes that of~\cite[Theorem~3.4]{VanGPA08} to the \LQO{} problem setting.

\begin{theorem}
    \label{thm:foncs_gramians}
    Let $\Sys$ and $\Sysred$ be asymptotically stable \LQO{} systems as in~\eqref{eq:lqosys} and~\eqref{eq:lqosys_red}, respectively. 
    Let $\Sysred$ be a local minimizer of the squared $\CH_2$ error $\CJ(\Sysred)=\|\Sys-\Sysred\|_{\CH_2}^2.$ Then, 
    $\Sysred$ satisfies the first-order optimality conditions
    \begin{subequations}
    \begin{align}
        \label{eq:foncs_gramians_Ar}
        \Bzero &=\left(\left(2\BQr - \BQoner\right)\BPr +  \left(2\BZ^{\trans}-\BZ_1^{\trans}\right)\BX\right)\\
        \label{eq:foncs_gramians_br}
        \Bzero &=\left(\left(2\BQr - \BQoner\right)\BBr +  \left(2\BZ^{\trans}-\BZ_1^{\trans}\right)\BB\right)\\
        \label{eq:foncs_gramians_cr}
        \Bzero &=\BCr\BPr-\BC\BX,~\mbox{and}\\
        \label{eq:foncs_gramians_Mr}
        \Bzero &=\BPr\BMkr\BPr-\BX^{\trans}\BM_k\BX, \ k=1,\ldots,p,
    \end{align}
    \end{subequations}
    where $\BX,\BZ\in\Rnr$, $\BPr,\BQr\in\Rrr$ satisfy~\eqref{eq:constraint_lyap_Pr}---\eqref{eq:constraint_sylv_Z}, and $\BZ_1\in\Rnr$, $\BQoner\in\Rrr$ satisfy~\eqref{eq:constraint_LTI_lyap_Qr} and~\eqref{eq:constraint_LTI_sylv_Z}.
    Further, if $\BPr$ and $2\BQr-\BQoner$ are nonsingular, then the $\CH_2$-optimal reduced model can be obtained via a Petrov-Galerkin projection as in~\eqref{eq:pg_proj} where the model reduction bases $\BVr,\BWr\in\Rnr$ are given by 
    \begin{equation*}
        \label{eq:opt_pg_matrices}
        \BVr=\BX\BPr^{-1}~~\mbox{and}~~\BWr={-(2\BZ-\BZ_1)(2\BQr-\BQoner)^{-1}},
    \end{equation*}
    and satisfy $\BWr^{\trans}\BVr=\BI_r$.
\end{theorem}

\begin{proof}
    The \FONC{}s in~\eqref{eq:foncs_gramians_Ar}---\eqref{eq:foncs_gramians_Mr} follow as a direct result of~\Cref{thm:gradients} along with the assumption that $\Sysred$ locally minimizes the $\CH_2$ error.
    Indeed, if $\Sysred$ is a local minimum of the function $\CJ(\Sysred)=\|\Sys-\Sysred\|_{\CH_2}^2$, then the gradients of $\CJ$ are identically zero at $\Sysred$.
    What is left to prove is that the optimal reduced model is obtained via a Petrov-Galerkin projection. 
    Because $\BPr\in\Rrr$ and $2\BQr-\BQoner\in\Rrr$ are assumed to be nonsingular, define the matrices $\BVr=\BX\BPr^{-1} \in\Rnr$ and $\BWr=-\left(2\BZ-\BZ_1\right)\left(2\BQr-\BQoner\right)^{-1}\in\Rnr$. 
    Then, the condition~\eqref{eq:foncs_gramians_Ar} implies
    \begin{align*}
        -\left(2\BQr - \BQoner\right)\BPr &= \left(2\BZ^{\trans}-\BZ_1^{\trans}\right)\BX\\
        \implies \BI_r &= -\left(2\BQr - \BQoner\right)^{-1}\left(2\BZ^{\trans}-\BZ_1^{\trans}\right)\BX \BPr^{-1}\\
        &=\BWr^{\trans}\BVr.
    \end{align*}
    Thus, $\BVr =\BX\BPr^{-1}$ and rearranging~\eqref{eq:foncs_gramians_cr} and~\eqref{eq:foncs_gramians_Mr} implies
    \begin{align*}
        \BCr&=\BC\BX\BPr^{-1}=\BC\BVr,\\
        \BMkr&=\left(\BPr^{-1}\BX^{\trans}\right)\BM_k\left(\BX\BPr^{-1}\right)=\BVr^{\trans}\BM_k\BVr,
    \end{align*}
    for each $k=1,\ldots,p$. Likewise,
    $\BWr=-(2\BZ-\BZ_1)(2\BQr-\BQoner)^{-1}$, and rearranging~\eqref{eq:foncs_gramians_br} thus implies
    \begin{align*}
            \BBr = -\left(2\BQr - \BQoner\right)^{-1}\left(2\BZ^{\trans}-\BZ_1^{\trans}\right)\BB=\BWr^{\trans}\BB.
    \end{align*}
    Lastly, $\BVr=\BX\BPr^{-1}$ implies $\BX=\BVr\BPr$. This identity and multiplying~\eqref{eq:constraint_sylv_X} by $\BWr^{\trans}$ yields
    \begin{align*}
        &\BWr^{\trans}\BA\left(\BVr\BPr\right) + \underbrace{\BWr^{\trans}(\BVr}_{=\BI_r}\BPr)\BAr^{\trans} + (\underbrace{\BWr^{\trans}\BB}_{=\BBr})\BBr^{\trans}&=\Bzero\\
        \implies ~~&\left(\BWr^{\trans}\BA\BVr\right)\BPr + \BPr\BAr^{\trans} + \BBr\BBr^{\trans}=\Bzero,
    \end{align*}
    which, by comparison with~\eqref{eq:constraint_lyap_Pr}, yields the remaining identity $\BAr=\BWr^{\trans}\BA\BVr$ since $\BPr$ is nonsingular. This completes the proof.
\end{proof}

Two remarks are in order. 

\begin{remark}
    Both the gradients of $\CJ(\Sysred)=\|\Sys-\Sysred\|_{\CH_2}^2$ in~\Cref{thm:gradients} and the Gramian-based \FONC{}s for $\CH_2$ optimality in~\Cref{thm:foncs_gramians} generalize the analogous results in the \LTI{} setting to \LQO{} systems, i.e., they establish the Wilson framework for the $\CH_2$-optimal model reduction of \LQO{} systems.
    Indeed, in the particular instance where $\BM_k=\Bzero_{n\times n}$, and so $\BMkr=\Bzero_{r\times r}$ as well, for each output $k$ and $\Sys=\Sysonlyc$ is an \LTI{} system as in~\eqref{eq:ltisys}, the reduced-order \QO{} observability Gramian $\BQr\in\Rrr$ and solution $\BZ\in\Rnr$ to~\eqref{eq:constraint_sylv_Z} reduce to $\BQr=\BQoner\in\Rrr$ in~\eqref{eq:Q1} and $\BZ=\BZ_1\in\Rnr$ solving~\eqref{eq:constraint_LTI_sylv_Z}, respectively. 
    If we apply~\Cref{thm:gradients} in this case, the gradients with respect to $\BAr$ and $\BBr$ then become
    \begin{align*}
        \gradA &= 2 \left(\BQoner\BPr +  \BZ_1^{\trans}\BX\right)\\\mbox{and} \ \
        \gradB &={2 \left(\BQoner\BBr +  \BZ_1^{\trans}\BB\right)}
    \end{align*}
    which are precisely those in~\eqref{eq:gradients_lti}; the gradient with respect to $\BCr$ is unchanged. 
    The Gramian-based \FONC{}s in~\Cref{thm:foncs_gramians} reduce in an obviously similar way. Thus,~\Cref{thm:gradients} and~\Cref{thm:foncs_gramians} contain~
    \eqref{eq:gradients_lti} and~\eqref{eq:wilson_lti} as a special case.
    \end{remark}

\begin{remark}
    In some applications, there is no linear component in the output, i.e.,  $\BC=\Bzero_{p\times n}$, so the observation term $\By=\By_2$ of~\eqref{eq:lqosys} is purely quadratic. 
    In such instances, $\BQ_1\in\Rnn$, $\BQoner\in\Rrr$ and $\BZ_1\in\Rrr$ are all zero. The gradients of $\CJ$ in~\Cref{thm:gradients} then become
    \begin{align*}
        \gradA &= 4\left(\BQr\BPr +  \BZ^{\trans}\BX\right),\\
        \gradB &=4\left(\BQr\BBr +  \BZ^{\trans}\BB\right),\\
    \gradMk &= 2\left(\BPr\BMkr\BPr-\BX^{\trans}\BM_k\BX\right), \ k =1\ldots,p,
    \end{align*} 
    where $\BQr\in\Rrr$, $\BZ\in\Rnr$ solve~\eqref{eq:constraint_lyap_Qr} and~\eqref{eq:constraint_sylv_Z} respectively, with $\BC$ and $\BCr$ equal to zero.
\end{remark}

\section{A two-sided iteration algorithm for LQO model reduction}
\label{sec:algorithm}

\Cref{thm:foncs_gramians} states that any local minimizer~\eqref{eq:lqosys_red} of the $\CH_2$ error in~\eqref{eq:H2opt_lqomor} is necessarily defined by a Petrov-Galerkin framework~\eqref{eq:pg_proj} where the optimal reduction matrices are given by $\BVr=\BX\BPr^{-1}$ and $\BWr=-\left(2\BZ-\BZ_1\right)\left(2\BQr-\BQoner\right)^{-1}$. 
It follows directly that the $\CH_2$ optimal \LQO-\ROM{} in question has an equivalent state-space realization given by
\begin{align*}
    \BAr&= \left(\left(2\BZ^{\trans}-\BZ_1^{\trans}\right)\BX\right)^{-1} \left(2\BZ^{\trans}-\BZ_1^{\trans}\right) \BA \BX\\
    \BBr&= \left(\left(2\BZ^{\trans}-\BZ_1^{\trans}\right)\BX\right)^{-1} \left(2\BZ^{\trans}-\BZ_1^{\trans}\right) \BB\\
    \BCr &= \BC\BX\\
    \BMkr&=\BX^{\trans}\BM_k\BX, \ \ k = 1\ldots,p,
\end{align*}
under the change of coordinate transformation $\BT=\BPr\in\Rrr$, with $\BT^{-1}=-\left(2\BQr-\BQoner\right)\in\Rrr$. 
In other words, an $\CH_2$-optimal \LQO-\ROM{} is defined by the projection framework~\eqref{eq:pg_proj} using the matrices $\BVr=\BX$ and $\BWr=2\BZ-\BZ_1$,
where $\BWr^{\trans}\BVr=\left(2\BZ^{\trans}-\BZ_1^{\trans}\right)\BX$ is invertible due to the identity from~\eqref{eq:foncs_gramians_Ar}, namely
\begin{equation*}
    \left(2\BZ^{\trans}-\BZ_1^{\trans}\right)\BX=-\left(2\BQr-\BQoner\right)\BPr,
\end{equation*}
and the assumption that $2\BQr-\BQoner$ and $\BPr$ are nonsingular. 
The right projection matrix $\BVr=\BX$ satisfies the Sylvester equation~\eqref{eq:constraint_sylv_X} corresponding to the $\CH_2$-optimal \LQO-\ROM. Because $\BZ\in\Rnr$ and $\BZ_1\in\Rnr$ are the solutions to~\eqref{eq:constraint_sylv_Z} and~\eqref{eq:constraint_LTI_sylv_Z}, the left projection matrix $\BWr =\wh\BZ\coloneqq2\BZ-\BZ_1$ satisfies a linear combination of~\eqref{eq:constraint_sylv_Z} and~\eqref{eq:constraint_LTI_sylv_Z}, i.e.,
\begin{align}
\begin{split}       \label{eq:opt_sylv_eqs}
    \BA^{\trans}\wh\BZ+ \wh\BZ \BAr - 2\sum_{k=1}^p\BM_k\BX\BMkr - \BC^{\trans}\BCr&=\Bzero\\
\mbox{where}~\BX\in\Rnr~\mbox{satisfies}~~\BA\BX+\BX\BAr^{\trans}+\BB\BBr^{\trans}&=\Bzero.
\end{split}
\end{align}
Note that the Sylvester equations in~\eqref{eq:opt_sylv_eqs} depend explicitly upon the $\CH_2$ optimal reduced model. 
Thus, if we wanted to project down the full-order matrices using the optimal choice of $\BVr=\BX$ and $\BWr=\wh\BZ$, this would require \emph{a priori} knowledge of the optimal reduced-order matrices $\BAr$, $\BBr$, $\BCr$, and $\BMkr$, which is, of course, unavailable. 

Based on these observations, we propose a computationally efficient procedure for $\CH_2$-optimal \LQO-\MOR{} that performs iterated projection with the solutions to the Sylvester equations~\eqref{eq:opt_sylv_eqs} in~\Cref{alg:lqo_tsia}.
This leads to a fixed point iteration where at every step, the equations~\eqref{eq:opt_sylv_eqs} are solved to project the full-order \LQO{} system and obtain a new reduced model. This procedure is repeated until some stopping criterion is satisfied. 
The idea of using fixed-point algorithms in $\CH_2$-optimal model reduction is not new; indeed, ours is inspired by \TSIA{}~\cite{XuZ11}, which was mentioned briefly in~\Cref{ss:h2_opt_lti}. \TSIA{} performs iterative projection on the \LTI{} full-order model using the solutions to~\eqref{eq:constraint_sylv_X} and~\eqref{eq:constraint_LTI_sylv_Z}.
Because of this, we call~\Cref{alg:lqo_tsia} the \emph{linear quadratic output two-sided iteration algorithm} $(\LQO\mbox{-}\TSIA)$.

\begin{algorithm}[h!]
\caption{Linear quadratic output two-sided iteration algorithm $(\LQO\mbox{-}\TSIA)$ for $\CH_2$-optimal \LQO-\MOR{}}
\begin{algorithmic}[1]
    \label{alg:lqo_tsia}
    \REQUIRE order-$n$ \LQO-\FOM{} $\Sys=\left(\BA,\BB,\BC,\BM_1,\ldots,\BM_k\right)$, $1\leq r<n$, and initial order-$r$ \LQO-\ROM{} $\Sysred^{(0)}=\big(\BAr^{(0)},\BBr^{(0)},\BCr^{(0)},\BMoner^{(0)},\ldots,\BMpr^{(0)}\big)$ with $\lambda(\BA)$, $\lambda(\BAr^{(0)})\subset\C_-$, $\BM_k,$ $\BMkr^{(0)}$ symmetric for all $k$, and convergence tolerance $\epsilon>0$.
    \vspace{2mm}
    \STATE \textbf{while error} $>\epsilon$ \textbf{do}
    \vspace{2mm}
     \STATE Solve the Sylvester equations~\eqref{eq:opt_sylv_eqs} for $\BX,\wh\BZ \in \Rnr$:
    \begin{align*} 
        \BA\BX + {\BX\BAr^{(j)}}^{\trans} + \BB{\BBr^{(j)}}^{\trans}&=\Bzero,\\
        \BA^{\trans}\wh\BZ + \wh\BZ \BAr^{(j)} - 2\sum_{k=1}^p\BM_k\BX\BMkr^{(j)} - \BC^{\trans}\BCr^{(j)}&=\Bzero.
    \end{align*}
    \vspace{-2mm}
    \STATE Perform orthogonalization on the solution matrices:
    \begin{align*} 
        \BVr = \text{orth}(\BX), \ \ \BWr = \text{orth}(\wh\BZ).
    \end{align*} 
    \vspace{-4mm}
    \STATE Compute reduced-order matrices:
    \begin{align*}
        \BAr^{(j+1)} &= (\BWr^{\trans}\BVr)^{-1} \BWr^{\trans} \BA \BVr,\\
        \BBr^{(j+1)} &= (\BWr^{\trans}\BVr)^{-1} \BWr^{\trans}\BB, \\
        \BCr^{(j+1)} &=  \BC\BVr, \\
        \BMkr^{(j+1)} &= \BVr^{\trans}  \BM_k \BVr,~~k=1,\ldots,p.
    \end{align*} 
    \vspace{-4mm}
    \STATE \textbf{end while}.
    \vspace{2mm}
    \ENSURE  Order-$r$ \LQO-\ROM{} specified by the reduced order matrices $\Sysred=\left(\BAr,\BBr,\BCr,\BMoner,\ldots,\BMpr\right)$.
    \end{algorithmic} 
\end{algorithm}

\subsection{Implementation details}
\label{ss:numerical_details}
Here, we discuss specific implementation details of~\Cref{alg:lqo_tsia} such as methods for computing solutions to the Sylvester equations~\eqref{eq:opt_sylv_eqs} and various stopping criteria.

\subsubsection{Solving the Sylvester equations in~\eqref{eq:opt_sylv_eqs}}
Evidently, the main computational cost at each step of the iteration is the solution of two Sylvester equations in Step 2 of~\Cref{alg:lqo_tsia}.
Because the solution matrices $\BX,\wh\BZ\in\Rnr$ in~\eqref{eq:opt_sylv_eqs} are tall and skinny, they can be obtained efficiently and directly by computing a Schur decomposition of \emph{the reduced matrix} $\BAr$, and solving for their columns via shifted linear solves.
Direct methods for solving this type of Sylvester equation are provided in~\cite{BenKS11}. For completeness, we briefly describe how these methods can be applied to the equation for $\wh\BZ$ in~\eqref{eq:opt_sylv_eqs}.

To be precise, let $\BAr^{\trans}=\BU^{\herm}\BS\BU$ be the Schur form of $\BAr^{\trans}$ so that $\BU\in\Crr$ is unitary, and $\BS\in\Crr$ is an upper triangular matrix.
Replacing $\BAr^{\trans}$ with its Schur form in the equation for $\wh\BZ$ in~\eqref{eq:opt_sylv_eqs} and multiplying on the right by $\BU^{\herm}$ produces a similar Sylvester equation
\begin{align}
\begin{split}
   \label{eq:triang_sylv_Z}
    &\BA^{\trans}\left(\wh{\BZ}\BU^{\herm}\right)+\left(\wh{\BZ}\BU^{\herm}\right)\BS^{\trans}\\
    &~~~- 2\sum_{k=1}^p \BM_k\BX\BMkr\BU^{\herm}-\BC^{\trans}\BCr\BU^{\herm}=\Bzero. 
\end{split}
\end{align}
The columns of $\wh{\BZ}\BU^{\herm}$ are computed by backward substitution;
the matrix $\BX$ can be computed using a nearly identical procedure.
Because $\BAr$ is a small $r\times r$ matrix, its Schur form is computable in $O(r^3)$ operations where $r\ll n$. Thus, the dominant cost in obtaining $\BX$ and $\wh{\BZ}$ lies in solving the $2r$ shifted linear systems of equations that result from the transformed equations~\eqref{eq:opt_sylv_eqs}.
Usually, the large-scale coefficient matrix $\BA$ has some inherent sparsity that can be taken advantage of. So, the complexity of the algorithm is roughly $2r\,O(S)$, where $O(S)$ is the complexity of the solver used.
In the worst case, where $\BA$ is dense, $O(S)$ is bounded above by $O(n^3)$. However, modern solvers are typically much more efficient, and so the complexity $2r\,O(S)$ will likely be more favorable than $O(n^3)$; we refer the reader to~\cite[Sec.~3]{BenKS11} and the references therein. Lastly, while the described procedure involves complex arithmetic, this can be avoided by using the real-valued block Schur form of $\BAr$ instead.

Significantly, this shows that the Sylvester equations in~\eqref{eq:opt_sylv_eqs} can be solved \emph{directly} using only \emph{sparse} calculations involving the full-order coefficient matrix $\BA$. As a point of comparison, the structure-preserving balanced truncation $(\BTr{})$ from~\cite{BenGPD21}, which we refer to as \LQO-\BTr{}, requires the one-time solution of the two large-scale Lyapunov equation~\eqref{eq:reach_lyap} and~\eqref{eq:QO_obsv_lyap} to obtain the \LQO{} system Gramians.  
Solving these via direct methods, such as the Bartels-Stewart algorithm, requires 
the Schur decomposition of the large-scale $\BA$ matrix and has a complexity of $O(n^3)$.

\subsubsection{Convergence criteria and stability of the \textsf{ROM}}
\label{sss:conv}
The iteration in~\Cref{alg:lqo_tsia} repeats until either some preset number of steps is reached, or the algorithm converges within the tolerance $\epsilon>0$ based on some pre-determined stopping criterion.
As is the case with any optimization problem, there exist a variety of possible choices for measuring convergence.
Because we are seeking to minimize the squared $\CH_2$ error $\CJ$ in~\eqref{eq:H2opt_lqomor}, for simplicity, we use the change in the (relative) squared $\CH_2$ error between consecutive iterates to monitor convergence in~\Cref{alg:lqo_tsia}.
From~\eqref{eq:CJ_tracechar_Q}, the square of the relative error due to $\Sysred^{(j)}$ at step $j$ of the iteration is 
\begin{align}
\label{eq:err_conv}
{\eta^{(j)}}\coloneqq
 \frac{\|\Sys-\Sysred^{(j)}\|_{\CH_2}^2}{\|\Sys\|_{\CH_2}^2}=\frac{\|\Sys\|_{\CH_2}^2+\|\Sysred^{(j)}\|_{\CH_2}^2+2\trace\big(\BB\BZ{\BBr^{(j)}}^{\trans}\big)}{\|\Sys\|_{\CH_2}^2},
\end{align}
where $\BZ\in\Rnr$ satisfies~\eqref{eq:constraint_sylv_Z} for $\Sysred^{(j)}$. Then,~\Cref{alg:lqo_tsia} is deemed to have converged if 
\begin{equation}
    \label{eq:H2errConv}
    |\eta^{(j)}-\eta^{(j-1)}|/\eta^{(1)}\leq \epsilon~~\mbox{for}~~\epsilon>0,
\end{equation}
i.e., when the normalized change in $\eta^{(j)}$ is $\leq\epsilon$.
The $\CH_2$ norm of the \FOM{} in~\eqref{eq:err_conv} can be precomputed before the iteration.

Another natural option is to monitor changes in the gradients $\gradA$, $\gradB$, $\gradC$, and $\gradMk$ of the error function $\CJ$, and terminate when they are sufficiently small. 
The information required to compute these quantities is readily available from the iteration itself. However, a relative metric that uses scaled gradients would require computing the Hessian of $\CJ$, which is not directly available from already computed quantities. So, we do not consider this criterion further.

Upon convergence of~\Cref{alg:lqo_tsia}, the optimality conditions in~\Cref{thm:foncs_gramians} are satisfied.
As is the case for the linear \TSIA~\cite{XuZ11} (and~\IRKA~\cite{GugAB08} for the interpolatory formulation of the linear $\CH_2$-optimality framework), convergence of~\Cref{alg:lqo_tsia} is \emph{not} guaranteed in general because it is a fixed-point iteration. However, similar to \TSIA{} and \IRKA{}, in practice, the algorithm performs well. 
We include a study of the convergence of~\Cref{alg:lqo_tsia} in~\Cref{sec:numerics}. 
For guaranteed convergence, one may consider developing a descent-based algorithm based on the explicit gradient formulae~\eqref{eq:gradAll} we derived.
We leave such considerations to future work.
\begin{remark}
    \label{remark:conv}
    Computing the $\CH_2$ norm of the \FOM{} entails the solution of the large-scale Lyapunov equation in~\eqref{eq:QO_obsv_lyap}, which, as already discussed, may not be feasible for truly large-scale problems.
    The examples we consider in~\Cref{sec:numerics} have a modest state-space dimension, and so the $\CH_2$ norm of the full-order system $\Sys$ can be computed without issue.
    However, note that in the error formula~\eqref{eq:err_conv}, and thus the criterion~\eqref{eq:H2errConv}, the only part that varies in each iteration is the `tail', i.e.,
    \begin{equation}
        \label{eq:err_tails}
        \tau^{(j)}\coloneqq \|\Sysred^{(j)}\|_{\CH_2}^2+2\trace\big(\BB\BZ{\BBr^{(j)}}^{\trans}\big).
    \end{equation}
  Therefore, if computing the true (or an approximate) $\CH_2$ norm of the \FOM{} is not feasible, one may instead monitor the relative change in the tails~\eqref{eq:err_tails}. The corresponding convergence criterion is
  \begin{equation}
    \label{eq:H2tailConv}
    |\tau^{(j)}-\tau^{(j-1)}|/|\tau^{(1)}|\leq \epsilon~~\mbox{for}~~\epsilon>0.
\end{equation}
It is straightforward to verify that if the quantity~\eqref{eq:H2errConv} \textit{is} decreasing throughout the iteration, i.e., $\eta^{(j+1)}-\eta^{(j)} \leq \eta^{(j)}-\eta^{(j-1)}$, then it follows that $\tau^{(j+1)}-\tau^{(j)} \leq \tau^{(j)}-\tau^{(j-1)}$, and vice versa.
We investigate the behavior of the proposed convergence criteria~\eqref{eq:H2errConv} and~\eqref{eq:H2tailConv} in~\Cref{sec:numerics}.
\end{remark}
As in the purely \LTI{} case, there is no guarantee that the intermediate or converged \LQO-\ROM{} will be asymptotically stable. 
In our numerical examples, intermediate unstable \ROM{}s are rarely encountered during the iteration, and we have never observed convergence to an unstable \ROM{}.

We conclude this section with a comment regarding the recent work~\cite[Algorithm~1]{GosA19}
where a two-sided iteration similar to~\Cref{alg:lqo_tsia} was proposed. The algorithm in~\cite{GosA19} was proposed without making explicit reference to $\CH_2$ optimality conditions of~\Cref{thm:foncs_gramians}; it was developed heuristically based on the corresponding two-sided iteration for linear systems in~\cite{XuZ11}.  As a result, despite the structural similarities, the method of~\cite{GosA19} solves a different Sylvester equation to compute $\wh\BZ$ and thus, unlike 
~\Cref{alg:lqo_tsia}, the resulting \LQO-\ROM{} does not satisfy the $\CH_2$ optimality conditions in~\Cref{thm:foncs_gramians} upon convergence.

\section{Numerical Results}
\label{sec:numerics}
In this section, we test the effectiveness of the approach in \Cref{alg:lqo_tsia} on a model problem.
All experiments were performed on a MacBook Air with 8 gigabytes of RAM and an Apple M2 processor running macOS Ventura version 13.4 with MATLAB 23.2.0.2515942 (R2023b) Update 7.
The source codes for recreating the numerical experiments and the computed results are
available at~\cite{supRei24}.

\subsection{Model problem and experimental setup}
\label{ss:advecdiff_example}
For testing the proposed \MOR{} approach in Algorithm~\ref{alg:lqo_tsia}, we use the example of a 1D Advection-diffusion equation from~\cite[Section~4.1]{DiazHGA23}.
The governing equations are 
\begin{align}
\begin{split}
\label{eq:advec}
    \frac{\partial}{\partial t} v(t,x)-\alpha \frac{\partial^2}{\partial x^2} v(t,x) + \beta \frac{\partial}{\partial x} v(t,x)&=0,\\
    v(t,0)=u_0(t), \ \ \alpha \frac{\partial}{\partial x} v(t,1)=u_1(t), \ \ v(0,x)&=0,
\end{split}
\end{align}
for $x\in(0,1)$ and $t\in(0,T)$ and inputs $u_0,u_1\in\CL_2(0,T)$; the diffusion and advection coefficients are $\alpha>0$ and $\beta \geq0$, respectively.
The output that we consider is
\begin{align}
\label{eq:quadcost}
    \frac{1}{2}\int_0^1|v(t,x)-1|^2@d@x,
\end{align}
Such an observable may arise from, e.g., the quadratic cost function in an optimal control problem.
Discretizing the equations in~\eqref{eq:advec} using $n+1$ equidistant spatial points yields an order-$n$ state-space model of the form~\eqref{eq:lqosys} with $m=2$ inputs ($u_0$ and $u_1$) and $p=1$ output $y$. Let $\Bx(t)\in\Rn$ denote the spatial discretization of $v(t,x)$,  $h\coloneqq 1/n$, and $\mathbf{1}\in\Rn$ the vector consisting of all ones.
Then, the discretization provides an approximation to the quadratic cost function~\eqref{eq:quadcost} 
\begin{equation*}
    \frac{h}{2}\|\Bx(t)-\mathbf{1}\|_2^2=\underbrace{-h@\mathbf{1}^{\trans}\Bx(t)}_{=y_1(t)} + \underbrace{\frac{h}{2}\Bx(t)^{\trans}\Bx(t)}_{=y_2(t)} + \frac{h}{2}\|\mathbf{1}\|_2^2.
\end{equation*}
To fit~\eqref{eq:lqosys}, the single output of the discretized system is given by $y(t)=\Bc\Bx(t) + \Bx(t)^{\trans}\BM\Bx(t)$ for $\Bc\coloneqq-h@\mathbf{1}^{\trans}\in\R^{1\times n}$ and $\BM\coloneqq\frac{h}{2}\BI_n\in\Rnn$, where $\BI_n$ is the $n\times n$ identity matrix. The approximation to the cost~\eqref{eq:quadcost} is recovered from $y(t)$ via $\frac{h}{2}\|\Bx(t)-\mathbf{1}\|_2^2=y(t) + \frac{h}{2}\|\mathbf{1}\|_2^2.$
An upwind finite-difference discretization of~\eqref{eq:advec} is performed using $n+1=3001$ spatial grid points to obtain an \LQO{} system in state-space form~\eqref{eq:lqosys}; the diffusion and advection parameters are selected as $\alpha=1$ and $\beta=1$, respectively.

For the model reduction of the \LQO{} system described above, we test the proposed \LQO-\TSIA{} approach in~\Cref{alg:lqo_tsia} against the \LQO-\BTr{} approach discussed in Section~\ref{ss:numerical_details} from~\cite{BenGPD21}.
To test the robustness of \LQO-\TSIA{} for different starting values, we compare three different strategies for selecting the initial \ROM{} parameters $\BAr^{(0)}\in\Rrr$, $\BBr^{(0)}\in\Rrm$, $\BCr^{(0)}\in\Rpr$ and $\BMr^{(0)}\in\Rrr$:
\begin{description}
    \item[\LQOTSIAdiag{}] uses a diagonal matrix of $r$ logarithmically-spaced points in the interval from $-10^{0}$ and $-10^{4}$ for $\BAr^{(0)}$, the leading $m$ columns and $p$ rows of the identity matrix for $\BBr^{(0)}$ and $\BCr^{(0)}$, and $\BMr^{(0)}=\BI_{r\times r}$; 
    \item[\LQOTSIAtrunc{}] uses the leading (truncated) $r$-dimensional blocks of the full-order model matrices;
    \item[\LQOTSIAeigs{}] uses an initial \ROM{} computed by Galerkin projection, where $\BWr=\BVr$ in~\eqref{eq:pg_proj} are taken to be an orthonormalized subset of $r$ eigenvectors of $\BA$, computed using MATLAB's \textsf{eigs} command. 
\end{description} 
For computing the solutions to the Sylvester equations in~\eqref{eq:opt_sylv_eqs} during Step 2 of~\Cref{alg:lqo_tsia}, we use the function \\\texttt{mess\textunderscore{}sylvester\textunderscore{}sparse\textunderscore{}dense} from  the M-M.E.S.S. library~\cite{SaaKB21-mmess-3.0} version 3.0.
For computing solutions to the Lyapunov equations~\eqref{eq:reach_lyap} and~\eqref{eq:obsv_lyap} in \LQO-\BTr, MATLAB's \texttt{lyap} command is used.
For each initialization strategy, \LQO-\TSIA{} is run with an overly strict tolerance of $\epsilon=10^{-14}$; in practice, one would choose a larger magnitude stopping criterion. 
We use the normalized change in the tails of the squared $\CH_2$ error~\eqref{eq:H2tailConv} to determine convergence; we record~\eqref{eq:H2errConv} as well to compare how these criteria evolve during the iteration.

We test the performance of the computed \LQO-\ROM{}s in approximating full-order time-domain output trajectories $y$ for particular choices of inputs $u_1$ and $u_2$.
The following error measures are used to assess the approximation quality of the reduced outputs:
To visibly compare the performance of the reduced models, we plot the full- and reduced-order outputs, as well as their pointwise relative error as given by
\begin{equation}
    \label{eq:pointwiseOutputError}
    \relerr(t_i)\coloneqq\frac{|y(t_i)-\yr(t_i)|}{|y(t_i)|},~~t_i\in[t_{{\min}},t_{{\max}}],
\end{equation}
where $t_i\in[t_{\min},t_{\max}]$ are the $N$ timesteps in the simulation.
To assess the average and worst-case performance of the \ROM{}s over the simulation, we use approximations of the relative $\CL_2$ error and $\CL_\infty$ error
\begin{align}
    \label{eq:relL2error}
    \relerr_{\CL_2}&\coloneqq\left(\frac{\sum_{i=1}^N |y(t_i)-\yr(t_i)|^2}{\sum_{i=1}^N|y(t_i)|^2}\right)^{1/2}\\
    \label{eq:relLinftyerror}
    \relerr_{\CL_\infty}&\coloneqq\max_{t_i\in[t_{{\min}},t_{{\max}}]}\frac{|y(t_i)-\yr(t_i)|}{|y(t_i)|}.
\end{align}
We also score the reduced model performance using the squared relative $\CH_2$ error given according to~\Cref{def:H2norm}.
\begin{equation}
    \label{eq:relH2error}
    \relerr_{\CH_2}\coloneqq\frac{\|\Sys-\Sysred\|_{\CH_2}^2}{\|\Sys\|_{\CH_2}^2}.
\end{equation}

\begin{figure*}[t!]
    \centering
    \begin{subfigure}[t!]{.49\linewidth}
        \raggedleft
        \begin{tikzpicture}[font = \plotfontsize]
  \pgfplotstableread{graphics/data/AdvecDiff3000_sinusoidal_r30_Outputs.dat}\tableINPUT
  
  \begin{axis}[%
    width  = .8\linewidth,
    height = .1\textheight,
    scale only axis,
    xmin = 0,
    xmax = 10,
    ymin = -.55,
    ymax = .05,
    xminorticks = true,
    yminorticks = true,
    xlabel = {time $t$ (s)},
    ylabel = {outputs},
    ylabel style   = {yshift = -.3em},
    scaled x ticks = false,
    x tick label style = {/pgf/number format/1000 sep={\,}},
    y tick label style = {/pgf/number format/1000 sep={\,}},
    cycle list name    = plotlist
  ]
  
    \foreach \y in {1, 2, ..., 5}{
      \addplot+ table[x index = 0, y index = \y] {\tableINPUT};
    }
  \end{axis}
\end{tikzpicture}\\
        \begin{tikzpicture}[font = \plotfontsize]
    \pgfplotstableread{graphics/data/AdvecDiff3000_sinusoidal_r30_OutputErrors.dat}\tableINPUT
  
  \begin{semilogyaxis}[%
    width  = .8\linewidth,
    height = .1\textheight,
    scale only axis,
    xmin = 0,
    xmax = 10,
    ymin = 0,
    ymax = 1e-3,
    xminorticks = true,
    yminorticks = true,
    xlabel = {time $t$ (s)},
    ylabel = {$\relerr(t)$},
    ylabel style   = {yshift = -.3em},
    scaled x ticks = false,
    x tick label style = {/pgf/number format/1000 sep={\,}},
    y tick label style = {/pgf/number format/1000 sep={\,}},
    cycle list name    = plotlist
  ]

  \pgfplotsset{cycle list shift = 1}
  
    \foreach \y in {1, 2, ..., 4}{
      \addplot+ table[x index = 0, y index = \y] {\tableINPUT};
    }
  \end{semilogyaxis}
\end{tikzpicture}\\
    \caption{Output magnitudes and pointwise relative errors~\eqref{eq:pointwiseOutputError} for inputs $u_0(t)=0$ and $u_{\cos}(t)=.5\cos(\pi t)+1$.}
    \label{fig:r30advecdiff_output_sin}
    \vspace{.5\baselineskip}
    \end{subfigure}%
    \hfill%
    \begin{subfigure}[t!]{.49\linewidth}
        \raggedleft
        \begin{tikzpicture}[font = \plotfontsize]
  \pgfplotstableread{graphics/data/AdvecDiff3000_exponential_r30_Outputs.dat}\tableINPUT
  
  \begin{axis}[%
    width  = .8\linewidth,
    height = .1\textheight,
    scale only axis,
    xmin = 0,
    xmax = 30,
    ymin = 0,
    ymax = 90,
    xminorticks = true,
    yminorticks = true,
    xlabel = {time $t$ (s)},
    ylabel = {outputs},
    ylabel style   = {yshift = -.3em},
    scaled x ticks = false,
    x tick label style = {/pgf/number format/1000 sep={\,}},
    y tick label style = {/pgf/number format/1000 sep={\,}},
    cycle list name    = plotlist
  ]
  
    \foreach \y in {1, 2, ..., 5}{
      \addplot+ table[x index = 0, y index = \y] {\tableINPUT};
    }
  \end{axis}
\end{tikzpicture}\\
        \begin{tikzpicture}[font = \plotfontsize]
    \pgfplotstableread{graphics/data/AdvecDiff3000_exponential_r30_OutputErrors.dat}\tableINPUT
  
  \begin{semilogyaxis}[%
    width  = .8\linewidth,
    height = .1\textheight,
    scale only axis,
    xmin = 0,
    xmax = 30,
    ymin = 0,
    ymax = 1e-2,
    xminorticks = true,
    yminorticks = true,
    xlabel = {time $t$ (s)},
    ylabel = {$\relerr(t)$},
    ylabel style   = {yshift = -.3em},
    scaled x ticks = false,
    x tick label style = {/pgf/number format/1000 sep={\,}},
    y tick label style = {/pgf/number format/1000 sep={\,}},
    cycle list name    = plotlist
  ]

  \pgfplotsset{cycle list shift = 1}
  
    \foreach \y in {1, 2, ..., 4}{
      \addplot+ table[x index = 0, y index = \y] {\tableINPUT};
    }
  \end{semilogyaxis}
\end{tikzpicture}\\
    \caption{Output magnitudes and pointwise relative errors~\eqref{eq:pointwiseOutputError} for inputs $u_0(t)=0$ and $u_{\exp}(t)=t^2e^{-t/2}$.}
    \label{fig:r30advecdiff_output_exp}
    \vspace{.5\baselineskip}
    \end{subfigure}%

  \begin{tikzpicture}
  \begin{axis}[%
    hide axis,
    width  = 1mm,
    height = 1mm,
    scale only axis,
    xmin = 0,
    xmax = 1,
    ymin = 0,
    ymax = 1,
    legend columns = 5, 
    legend style   = {
      at     = {(0,0)},
      anchor = center,
      /tikz/every even column/.append style = {column sep = 0.2cm}},
    legend cell align  = {left},
    clip mode          = individual,
    cycle list name    = plotlist]

    \foreach \y in {1, 2, ..., 5}{
      \addplot+ coordinates{ (0, 0) };
    }
    \addlegendentry{\ensuremath{\mathsf{FOM}}}
    \addlegendentry{\LQOTSIAdiag{}}
    \addlegendentry{\LQOTSIAtrunc{}}
    \addlegendentry{\LQOTSIAeigs{}}
    \addlegendentry{\LQOBT{}}
  \end{axis}
\end{tikzpicture}
  \vspace{.5\baselineskip}
    \label{fig:outputErrors}
  \caption{Output magnitudes and pointwise relative errors~\eqref{eq:pointwiseOutputError} of the full-order and order $r=30$ \LQO-\ROM{}s driven by $u_{\cos}$ and $u_{\exp}$. Each of the \LQO-\TSIA{} reduced models and the \LQO-\BTr{} reduced model provide reasonably accurate recreations of the \FOM{} output, although the approximations produced by \LQO-\TSIA{} perform slightly better overall.}
\end{figure*}

\begin{table*}[t!]
  \centering
  \begin{tabular}{lllll}
    \hline\noalign{\smallskip}
      & \multicolumn{1}{c}{\LQOTSIAdiag{}}
      & \multicolumn{1}{c}{\LQOTSIAtrunc{}}
      & \multicolumn{1}{c}{\LQOTSIAeigs{}}
      & \multicolumn{1}{c}{\LQOBT{}}\\
      \noalign{\smallskip}\hline\noalign{\smallskip}
      $\relerr_{\CL_2}$~ ($u_{\cos}$) & $1.1079\texttt{e-}6$
      & $1.1499\texttt{e-}6$
      & $\boldsymbol{1.0454\texttt{e-}6}$
      & $5.5315\texttt{e-}5$\\
        $\relerr_{\CL_2}$~ ($u_{\exp}$) & $4.5888\texttt{e-}6$
      & $4.7583\texttt{e-}6$
      & $\boldsymbol{4.4079\texttt{e-}6}$
      & $1.2086\texttt{e-}5$\\
      $\relerr_{\CL_\infty}$ ($u_{\cos}$) & $6.0951\texttt{e-}6$
      & $6.5111\texttt{e-}6$
      & $\boldsymbol{5.6404\texttt{e-}6}$
      & $6.0878\texttt{e-}4$\\
        $\relerr_{\CL_\infty}$ ($u_{\exp}$) & $4.7676\texttt{e-}5$
      & $4.7680\texttt{e-}5$
      & $\boldsymbol{4.7674\texttt{e-}5}$
      & $2.3879\texttt{e-}2$\\
      \noalign{\smallskip}\hline\noalign{\smallskip}
  \end{tabular}
  \caption{Relative $\CL_{2}$, $\CL_\infty$ errors~\eqref{eq:relL2error},~\eqref{eq:relLinftyerror} for the order $r=30$ \LQO-\ROM{}s. The smallest error is highlighted in \textbf{boldface}.}
  \label{tab:relErrors}
\end{table*} 

\subsection{Discussion of results}

We compute \LQO-\ROM{}s of order $r=30$ using the \LQOTSIAdiag{}, \LQOTSIAtrunc{}, \LQOTSIAeigs{}, and \LQOBT{} approaches described in~\Cref{ss:advecdiff_example}.
We perform time-domain simulations of the full- and reduced-order outputs using two different pairs of input signals; in either case, we enforce a boundary condition of $u_0(t)=0.$ 

We first simulate the full- and reduced-order models by applying a sinusoidal input  $u_1(t)=u_{\cos}(t)\coloneqq.5\cos(\pi t)+1$, and secondly by applying an exponentially damping quadratic input $u_1(t)=u_{\exp}(t)\coloneqq t^2 e^{-t/5}$. 
To capture the full spread of the output dynamics in each experiment, the first simulation occurs over $T=10$ seconds, and the second over $T=30$ seconds.
We plot the output trajectories of the full and reduced-order systems due to $u_{\cos}(t)$ and $u_{\exp}(t)$ and the associated relative pointwise error~\eqref{eq:pointwiseOutputError} in~\Cref{fig:r30advecdiff_output_sin} and~\Cref{fig:r30advecdiff_output_exp}, respectively.
The relative $\CL_2$ and $\CL_\infty$ output errors according to~\eqref{eq:relL2error} and~\eqref{eq:relLinftyerror} are reported in~\Cref{tab:relErrors}.
As evidenced by the plots of the pointwise relative output error~\eqref{eq:pointwiseOutputError}, both the \LQO-\TSIA{} and \LQO-\BTr{} \ROM{}s are successful in replicating the full-order output $y$ for both pairs of input signals.
For the sinusoidal input $u_{\cos}$,~\Cref{fig:r30advecdiff_output_sin} as well as~\Cref{tab:relErrors} show that the \LQO-\TSIA{} reduced models perform a couple of orders of magnitude better than \LQO-\BTr{} on average and in the worst case. 
For the exponential input $u_{\exp}$, both methods perform similarly well on average as seen in Figure~\ref{fig:r30advecdiff_output_exp}. The \LQO-\TSIA{} \ROM{}s perform slightly better in the $\CL_\infty$ metric, given that the \LQOBT{} \ROM{} produces a large error near $t=0$.
We note that all three of the \LQO-\TSIA{} \ROM{}s produce very similar approximations, despite the different initialization strategies.

The timings and iteration counts (if applicable) for computing the various \LQO-\ROM{}s are reported in~\Cref{tab:runtimes}.
Even with a modest model order of $n=3000$, we observe that \LQO-\TSIA{} produces a \LQO-\ROM{} faster than \LQO-\BTr{}, given that the Lyapunov equations in~\eqref{eq:reach_lyap} and~\eqref{eq:obsv_lyap} do not need to be solved. We believe this scaling proves promising, and that \LQO-\TSIA{} may be successful when applied to truly large-scale problems.

\begin{table}[t!]
  \centering
  \resizebox{\linewidth}{!}{
  \begin{tabular}{lllll}
    \hline\noalign{\smallskip}
      & \multicolumn{1}{c}{\LQOTSIAdiag{}}
      & \multicolumn{1}{c}{\LQOTSIAtrunc{}}
      & \multicolumn{1}{c}{\LQOTSIAeigs{}}
      & \multicolumn{1}{c}{\LQOBT{}}\\
      \noalign{\smallskip}\hline\noalign{\smallskip}
      Run time (s) & $7.94$ s 
      & $4.61$ s
      & $3.84$ s 
      & $29.12$ s\\
     Iteration count & $48$ & $28$ & $29$ & N/A\\
      \noalign{\smallskip}\hline\noalign{\smallskip}
  \end{tabular}
  }%
  \caption{Run times and iteration counts for computing the order $r=30$ \LQO-\ROM{}s via \LQOTSIAdiag{}, \LQOTSIAtrunc{}, \LQOTSIAeigs{}, and \LQOBT{}. The convergence criteria used for the \LQO-\TSIA{} \ROM{}s is the normalized change in the tails of the squared $\CH_2$ error~\eqref{eq:H2tailConv} with a tolerance of $\epsilon=10^{-14}$.}
  \label{tab:runtimes}
\end{table} 

To understand the behavior of the two proposed convergence criteria~\eqref{eq:H2errConv} and~\eqref{eq:H2tailConv} discussed in~\Cref{sss:conv}, we run each of \LQOTSIAdiag{}, \LQOTSIAtrunc{}, and \LQOTSIAeigs{} \emph{twice} for $r = 30$. First, the normalized change in the tails of the squared $\CH_2$ error~\eqref{eq:H2tailConv} is used to determine convergence of \LQOTSIAdiag{}, \LQOTSIAtrunc{}, and \LQOTSIAeigs{}; second, the normalized change in the $\CH_2$ errors~\eqref{eq:H2errConv} is used to determine convergence.
The magnitude of either~\eqref{eq:H2errConv} or~\eqref{eq:H2tailConv} throughout each of the six iterations is plotted in~\Cref{fig:advecdiff_conv}. 
The different initialization strategies exhibit very similar convergence behavior overall. 
Both \LQOTSIAdiag{} and\\\LQOTSIAeigs{} find a local minimum and converge after less than $50$ iterations using either convergence criteria. For \LQOTSIAtrunc{}, the algorithm converges after 28 iterations using~\eqref{eq:H2tailConv}. When using~\eqref{eq:H2errConv}, the error stagnates around $10^{-12}$ once the change in tails begins to hover around the order of machine precision.
We observe that the normalized change in the tails~\eqref{eq:H2tailConv} tracks very well with the normalized change in the $\CH_2$ errors~\eqref{eq:H2errConv} throughout each comparable iteration, and conclude that this is a reliable alternative for monitoring convergence if computing the $\CH_2$ norm of the \FOM{} is intractable.

\begin{figure}[t!]
\centering
\raggedleft
\begin{tikzpicture}[font = \plotfontsize]

\begin{semilogyaxis}[%
    width  = .8\linewidth,
    height = .15\textheight,
    scale only axis,
    xmin = 1,
    xmax = 50,
    ymin = 1e-16,
    ymax = 1e7,
    xminorticks = true,
    yminorticks = true,
    xlabel = {iteration count $j$},
    ylabel style   = {yshift = -.3em},
    scaled x ticks = false,
    x tick label style = {/pgf/number format/1000 sep={\,}},
    y tick label style = {/pgf/number format/1000 sep={\,}},
    cycle list name    = convplotlist,
  ]

  \pgfplotsset{cycle list shift = 1}

\pgfplotstableread{graphics/data/AdvecDiff3000_r30_convTails_stdInit.dat}\tableINPUTdiagTails
  
    \foreach \y in {1}{
      \addplot+ table[x index = 0, y index = \y] {\tableINPUTdiagTails};
    }


\pgfplotstableread{graphics/data/AdvecDiff3000_r30_convErrors_stdInit.dat}\tableINPUTdiagErrors
  
    \foreach \y in {1}{
      \addplot+ table[x index = 0, y index = \y] {\tableINPUTdiagErrors};
    }

\pgfplotstableread{graphics/data/AdvecDiff3000_r30_convTails_truncInit.dat}\tableINPUTtruncTails
  
    \foreach \y in {1}{
      \addplot+ table[x index = 0, y index = \y] {\tableINPUTtruncTails};
    }

\pgfplotstableread{graphics/data/AdvecDiff3000_r30_convErrors_truncInit.dat}\tableINPUTtruncErrors
  
    \foreach \y in {1}{
      \addplot+ table[x index = 0, y index = \y] {\tableINPUTtruncErrors};
    }

\pgfplotstableread{graphics/data/AdvecDiff3000_r30_convTails_eigsInit.dat}\tableINPUTeigsTails
  
    \foreach \y in {1}{
      \addplot+ table[x index = 0, y index = \y] {\tableINPUTeigsTails};
    }

\pgfplotstableread{graphics/data/AdvecDiff3000_r30_convErrors_eigsInit.dat}\tableINPUTeigsErrors
  
    \foreach \y in {1}{
      \addplot+ table[x index = 0, y index = \y] {\tableINPUTeigsErrors};
    }
    
\addplot+ [domain=0:50, samples=2, black] {10e-14};
\node[anchor=south] at (axis cs:8, .5*10e-14) {$\epsilon = 10^{-14}$};

\end{semilogyaxis}
\end{tikzpicture}
\hfill%
\begin{center} 
     \begin{tikzpicture}
  \begin{axis}[%
    hide axis,
    width  = 1mm,
    height = 1mm,
    scale only axis,
    xmin = 0,
    xmax = 1,
    ymin = 0,
    ymax = 1,
    legend columns = 2, 
    legend style   = {
      at     = {(0,0)},
      anchor = center,
      /tikz/every even column/.append style = {column sep = 0.2cm}},
    legend cell align  = {left},
    clip mode          = individual,
    cycle list name    = convplotlist]

    \pgfplotsset{cycle list shift = 1}
    \foreach \y in {1, 2, ..., 6}{
      \addplot+ coordinates{ (0, 0) };
    }
    \addlegendentry{\eqref{eq:H2errConv} for \LQOTSIAdiag{}}
    \addlegendentry{\eqref{eq:H2tailConv} for \LQOTSIAdiag{}}
    \addlegendentry{\eqref{eq:H2errConv} for \LQOTSIAtrunc{}}
    \addlegendentry{\eqref{eq:H2tailConv} for \LQOTSIAtrunc{}}
    \addlegendentry{\eqref{eq:H2errConv} for \LQOTSIAeigs{}}
    \addlegendentry{\eqref{eq:H2tailConv} for \LQOTSIAeigs{}}
  \end{axis}
\end{tikzpicture}
\end{center}
\caption{Convergence behavior of \LQOTSIAdiag{}, \LQOTSIAtrunc{}, and \LQOTSIAeigs{} for $r=30$. The criteria \eqref{eq:H2errConv} and \eqref{eq:H2tailConv} are plotted throughout the iterations for which they are recorded with a tolerance of $\epsilon=10^{-14}$.}
\label{fig:advecdiff_conv}
\end{figure}

As a final point of comparison, we compute a hierarchy of reduced models using \LQOTSIAdiag{}, \LQOTSIAtrunc{}, \\ \LQOTSIAeigs{}, and \LQOBT{} for orders $r=2, 4, \ldots, 30$, and compute the squared relative $\CH_2$ errors~\eqref{eq:relH2error} of the \LQO{}-\ROM{}s. These errors are plotted in~\Cref{fig:h2errors}.
For the \LQO-\TSIA{} \ROM{}s,~\eqref{eq:H2tailConv} is used to determine convergence.
We observe that the error decreases steadily for each method; the errors begin to level off and stagnate for orders not much greater than $r=30$. For smaller orders, the \LQO-\TSIA{} reduced models exhibit a slightly smaller error, although all of the methods produce very small errors for each order of reduction. Despite the different initialization strategies, for a fixed order, the \LQO-\TSIA{} \ROM{}s incur roughly the same error, suggesting that in each case the iteration converges to the same local minimum.


\begin{figure}[t]
\centering
\raggedleft
\begin{tikzpicture}[font = \plotfontsize]
  \pgfplotstableread{graphics/data/AdvecDiff3000_H2errors.dat}\tableINPUT
  
  \begin{semilogyaxis}[%
    width  = .8\linewidth,
    height = .15\textheight,
    scale only axis,
    xmin = 2,
    xmax = 30,
    ymin = 1e-13,
    ymax = 1,
    xminorticks = true,
    yminorticks = true,
    xlabel = {order $r$},
    ylabel = {$\relerr_{\CH_2}$},
    ylabel style   = {yshift = -.3em},
    scaled x ticks = false,
    x tick label style = {/pgf/number format/1000 sep={\,}},
    y tick label style = {/pgf/number format/1000 sep={\,}},
    cycle list name    = errorplotlist
  ]

  \pgfplotsset{cycle list shift = 1}
  
    \foreach \y in {1, 2, 3, 4}{
      \addplot+ table[x index = 0, y index = \y] {\tableINPUT};
    }
  \end{semilogyaxis}
\end{tikzpicture}
\begin{center}
    \begin{tikzpicture}
  \begin{axis}[%
    hide axis,
    width  = 1mm,
    height = 1mm,
    scale only axis,
    xmin = 0,
    xmax = 1,
    ymin = 0,
    ymax = 1,
    legend columns = 2, 
    legend style   = {
      at     = {(0,0)},
      anchor = center,
      /tikz/every even column/.append style = {column sep = 0.2cm}},
    legend cell align  = {left},
    clip mode          = individual,
    cycle list name    = errorplotlist]

    \pgfplotsset{cycle list shift = 1}
    \foreach \y in {1, 2, ..., 4}{
      \addplot+ coordinates{ (0, 0) };
    }
    \addlegendentry{\LQOTSIAdiag{}}
    \addlegendentry{\LQOTSIAtrunc{}}
    \addlegendentry{\LQOTSIAeigs{}}
    \addlegendentry{\LQOBT{}}
  \end{axis}
\end{tikzpicture}
\end{center}
\caption{Relative squared $\CH_2$ errors~\eqref{eq:relH2error} due to the \LQOTSIAdiag{}, \LQOTSIAtrunc{}, \LQOTSIAeigs{}, and \LQOBT{} \ROM{}s for orders $r = 2, 4, \ldots, 30$. For each method, the error decreases steadily as the order $r$ of reduction increases.}
\label{fig:h2errors}
\end{figure}

  
\section{Conclusions}%
\label{sec:conclusions}

In this work, we have presented a novel $\CH_2$-optimality framework for the model reduction of \LQO{} dynamical systems. 
The novel contributions are the computation of gradients of the squared $\CH_2$ error for such systems presented in~\Cref{thm:gradients}, and the generalization the well-known Gramian-based $\CH_2$-optimality conditions for linear dynamical systems to the model reduction of \LQO{} systems, presented in~\Cref{thm:foncs_gramians}. 
Finally, a linear quadratic-output two-sided iteration algorithm, \LQO-\TSIA{}, is proposed in~\Cref{alg:lqo_tsia} for the efficient $\CH_2$-optimal model reduction of \LQO{} systems.
We illustrate the effectiveness of \LQO-\TSIA{} on a numerical example resulting from an optimal control problem. In the future, we will consider alternative $\CH_2$-optimality frameworks for \LQO{} system using the concept of rational function interpolation.


\section*{Acknowledgments}%
\addcontentsline{toc}{section}{Acknowledgments}

The work of Gugercin and Reiter is based upon work supported by the National Science
Foundation under Grant No. AMPS-1923221.
We thank Alejandro Diaz and Matthias Heinkenschloss for providing the code for creating the test examples in~\Cref{sec:numerics}, and are grateful for the feedback of several anonymous referees which that helped to improve the presentation of this manuscript.


\appendix
\section*{Proof of~\Cref{thm:gradients}}
Here, we present the proof of~\Cref{thm:gradients}.
To simplify its presentation, we recall the following Lemma that we will invoke repeatedly.
\begin{lemma}[{\cite[Lemma~A.1]{YanJ99}}]
    \label{lemma:trace_form}
    Let $\BA\in \Rnn$, $\BAr\in\Rrr$ and $\BD,\BF\in\Rnr$ be such that the matrices $\BY,\BW\in\Rnr$ solve the Sylvester equations
    \begin{align*}
        \BA\BY + \BY\BAr^{\trans} + \BD&=\Bzero \\ 
        \mbox{and}~~\BA^{\trans}\BW + \BW\BAr + \BF&=\Bzero,
    \end{align*}
    then, $\trace\left(\BD^{\trans}\BW\right)=\trace\left(\BF^{\trans}\BY\right)$.
\end{lemma}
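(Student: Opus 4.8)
The plan is to avoid any Kronecker or vectorization machinery and instead derive the identity directly from the two Sylvester equations, using nothing more than transpose invariance $\trace(\BD) = \trace(\BD^{\trans})$ and the cyclic property of the trace. The key observation is that each of the two quantities $\trace(\BD^{\trans}\BW)$ and $\trace(\BF^{\trans}\BY)$ can be isolated by pairing one Sylvester equation with the \emph{other} equation's solution, after which the leftover terms turn out to coincide.

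First I would left-multiply the first equation $\BA\BY + \BY\BAr^{\trans} + \BD = \Bzero$ by $\BW^{\trans}$ and take the trace, which isolates
\begin{align*}
\trace(\BD^{\trans}\BW) = \trace(\BW^{\trans}\BD) = -\trace(\BW^{\trans}\BA\BY) - \trace(\BW^{\trans}\BY\BAr^{\trans}).
\end{align*}
Symmetrically, left-multiplying the second equation $\BA^{\trans}\BW + \BW\BAr + \BF = \Bzero$ by $\BY^{\trans}$ and taking the trace gives
\begin{align*}
\trace(\BF^{\trans}\BY) = \trace(\BY^{\trans}\BF) = -\trace(\BY^{\trans}\BA^{\trans}\BW) - \trace(\BY^{\trans}\BW\BAr).
\end{align*}

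It then remains only to match the two right-hand sides term by term. For the terms involving $\BA$, transpose invariance yields $\trace(\BY^{\trans}\BA^{\trans}\BW) = \trace\big((\BW^{\trans}\BA\BY)^{\trans}\big) = \trace(\BW^{\trans}\BA\BY)$. For the terms involving $\BAr$, combining transpose invariance with the cyclic property gives $\trace(\BW^{\trans}\BY\BAr^{\trans}) = \trace(\BAr\BY^{\trans}\BW) = \trace(\BY^{\trans}\BW\BAr)$. Hence both pairs agree, the two right-hand sides are equal, and the claim $\trace(\BD^{\trans}\BW) = \trace(\BF^{\trans}\BY)$ follows.

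Since the argument is pure bookkeeping with trace identities, I do not anticipate a genuine obstacle. The only point requiring care is keeping the transposes aligned, so that the $\BA$-term and the $\BAr$-term each cancel against their correct counterpart rather than against one another; a single sloppy transposition would break the pairing. I would also note in passing that this is exactly the well-posedness regime already invoked in~\Cref{thm:H2_from_gramians}: when the spectra of $\BA$ and $-\BAr$ are disjoint both Sylvester equations have unique solutions, so the stated $\BY$ and $\BW$ are well defined and the identity is unambiguous.
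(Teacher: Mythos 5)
Your proof is correct. A structural point first: the paper itself never proves this lemma---it is imported verbatim from \cite[Lemma~A.1]{YanJ99}---so there is no in-paper argument to compare against, and your derivation stands on its own. It is also the natural one: left-multiply each Sylvester equation by the transpose of the \emph{other} equation's solution, take traces, and match the $\BA$-terms and the $\BAr$-terms using invariance of the trace under transposition and cyclic permutation; I verified each pairing and the signs work out exactly as you claim. It is worth noting that the same pairing idea does appear in the paper, but in vectorized clothing: in the proof of \Cref{thm:H2_from_gramians}, the identity \eqref{eq:H2ip_P} is obtained by rewriting the Sylvester equations \eqref{eq:sylveq_X} and \eqref{eq:sylveq_Z} as Kronecker-product linear systems and pairing $\vecm(\BX)$ against the equation for $\BZ$; your trace-level argument is precisely that computation with the vectorization machinery stripped away, which is arguably cleaner. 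One small refinement: the closing remark about unique solvability (disjoint spectra of $\BA$ and $-\BAr$) is not needed for the lemma as stated. Your argument uses only that $\BY$ and $\BW$ \emph{satisfy} the two equations, so the trace identity holds for any solutions, unique or not; uniqueness matters only where the lemma is applied in the paper (e.g., to the constraint equations \eqref{eq:constraint_sylv_X}--\eqref{eq:constraint_sylv_Z}), not for the identity itself.
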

From~\Cref{lemma:trace_form}, we also have that $\trace\left(\BW^{\trans}\BD\right)=\trace\left(\BY^{\trans}\BF\right)$ by properties of the trace.
In the subsequent result, we take for granted any such identities that arise from cyclic permutations or transposes applied to the result of~\Cref{lemma:trace_form}.
When it is helpful for simplifying notation, we drop the dependence of $\CJ$ on $\Sysred$.

Note that $\Sys$ is fixed. 
We begin by calculating the gradients with respect to the quadratic-output matrices~\eqref{eq:gradM}.
For any $k=1,\ldots,p$, consider an arbitrary infinitesimal $\pertMk\in\Rrr$.
By~\eqref{eq:deriv_as_grad}, it suffices to show the claimed form of the gradient $\gradMk$ in~\eqref{eq:gradM} satisfies
\[\CJ(\Sysred+\pertSys)=\CJ(\Sysred)+\langle\gradMk, \pertMk\rangle_{\frob}+O(\|\pertMk\|_{\frob}^2)\]
where $\pertSys$ denotes the perturbation in the reduced model $\Sysred$ due to $\pertMk$.
The first-order perturbation in $\BMkr$ in turn induces perturbations $\pertZ\in\Rnr$ and $\pertQ\in\Rrr$ in the solutions to~\eqref{eq:constraint_sylv_Z} and~\eqref{eq:constraint_lyap_Qr}.
Expanding upon the perturbed equations~\eqref{eq:constraint_sylv_Z} and~\eqref{eq:constraint_lyap_Qr} reveals that the perturbations $\pertZ$ and $\pertQ$ themselves satisfy
\begin{subequations}
\label{eq:pertM_pertZ}
\begin{align}
    \label{eq:pertM_pertZ_sylv}
    &\BA^{\trans}\pertZ + \pertZ\BAr - \BM_k\BX\pertMk^{\trans}=\Bzero\\
    \begin{split}
    \label{eq:pertM_pertQ_sylv}
    \mbox{and}~~&\BAr^{\trans}\pertQ + \pertQ\BAr + \BMkr\BPr\pertMk^{\trans}\\
    &~~~~~+ \pertMk\BPr\BMkr + 
    \pertMk\BPr\pertMk^{\trans}=\Bzero.
    \end{split}
\end{align} 
\end{subequations}
Note that the term $\pertMk\BPr\pertMk^{\trans}$ is $O(\|\pertMk\|_{\frob}^2)$ in the sense of~\eqref{eq:deriv_as_grad}.
By~\eqref{eq:CJ_tracechar_Q}, the error $\CJ(\Sysred+\pertSys)$ is expanded as
\begin{align*}
    \CJ(\Sysred+\pertSys)&=\CJ(\Sysred) + 2\trace\left(\BB^{\trans}\pertZ\BBr\right) + \trace\left(\BBr^{\trans}\pertQ\BBr\right).
\end{align*}
By applying~\Cref{lemma:trace_form} to the Sylvester equations~\eqref{eq:constraint_sylv_X} and~\eqref{eq:pertM_pertZ_sylv} and using the properties of the trace, the terms in the first-order expansion of $\CJ(\Sysred+\pertSys)$ can be re-written as
\begin{align*}
    \trace\left(\BB^{\trans}\pertZ\BBr\right)=\trace\left(\BBr\BB^{\trans}\pertZ\right)&=\trace\left(-\BM_k\BX\pertMk^{\trans}\BX^{\trans}\right)\\
    &= \trace\left(-\BX^{\trans}\BM_k\BX\pertMk\right).
\end{align*}
\Cref{lemma:trace_form} can be applied to equations~\eqref{eq:constraint_lyap_Pr} and~\eqref{eq:pertM_pertQ_sylv} in a similar way to show that
\begin{align*}
    \trace\left(\BBr^{\trans}\pertQ\BBr\right)&=2\trace\left(\BPr\BMkr\BPr\pertMk\right) + O(\|\pertMk\|_{\frob}^2).
\end{align*}
Substituting these expressions for $\trace\left(\BB^{\trans}\pertZ\BBr\right)$ and $\trace\left(\BBr^{\trans}\pertQ\BBr\right)$ into the expression for $\CJ(\Sys+\pertSys)$, we get
\begin{align*}
    &\CJ(\Sysred+\pertSys)=\CJ(\Sysred) \\
    &~+ \left\langle2\left(\BPr\BMkr\BPr-\BX^{\trans}\BM_k\BX\right),\pertMk \right\rangle_{\frob}+ O(\|\pertMk\|_{\frob}^2).
\end{align*}
So, the gradients with respect to $\BMkr$ satisfy $\gradMk=2\left(\BPr\BMkr\BPr-\BX^{\trans}\BM_k\BX\right)$ for each $k$ as claimed in~\eqref{eq:gradM}.

Next, we compute the gradient $\gradC$ in~\eqref{eq:gradC}. Consider an arbitrary infinitesimal perturbation $\pertC\in\Rpr$ to $\BCr$; let $\pertSys$ be the perturbation to $\Sysred$ corresponding to $\pertC$.
From~\eqref{eq:CJ_tracechar_P}, the first-order expansion of $\CJ(\Sys+\pertSys)$ can be expressed as
\begin{align*}
    \CJ(\Sysred+\pertSys)
    &= \CJ(\Sysred) - 2\trace\left(\BC\BX\pertC^{\trans}\right) \\
    &~~~~~~~~ + 2\trace\left(\BCr\BPr\pertC^{\trans}\right)+ O(\|\pertC\|_{\frob}^2)\\
    =\CJ(\Sysred) + &\left\langle 2\left(\BCr\BPr-\BC\BX\right),@\pertC\right\rangle_{\frob} + O(\|\pertC\|_{\frob}^2).
\end{align*}
So $\gradC=2\left(\BCr\BPr-\BC\BX\right)$ as claimed in~\eqref{eq:gradC}.

We show the formula for $\gradB$ in~\eqref{eq:gradB}.
Consider an arbitrary infinitesimal perturbation $\pertB\in \Rr$ to $\BBr$. This induces perturbations $\pertX\in\Rnr$ and $\pertP\in\Rnr$ in the solutions of~\eqref{eq:constraint_sylv_X} and~\eqref{eq:constraint_lyap_Pr}.
The resulting perturbations satisfy
\begin{subequations}
    \begin{align}
    \label{eq:pertb_pertX_sylv}
        &\BA\pertX + \pertX\BAr^{\trans} + \BB\pertB^{\trans}=\Bzero,\\
        \begin{split}
        \label{eq:pertb_pertP_sylv}
        \mbox{and}~~&\BAr\pertP+ \pertP\BAr^{\trans} + \pertB\BBr^{\trans} \\
        &~~~~~+ \BBr\pertB^{\trans} + O(\|\pertB \|_{\frob}^2)=\Bzero.
        \end{split}
    \end{align}
\end{subequations}
The solutions to~\eqref{eq:constraint_lyap_Qr} and~\eqref{eq:constraint_sylv_Z} depend linearly upon $\BX$ and $\BPr$, so the perturbations $\pertX$ and $\pertP$ induce further perturbations $\pertZ\in\Rnr$ and $\pertQ\in\Rnr$ in the solutions to~\eqref{eq:constraint_sylv_Z} and~\eqref{eq:constraint_lyap_Qr}. These perturbations satisfy
    \begin{subequations}
    \begin{align}
        \label{eq:pertb_pertZ_sylv}
        &\BA^{\trans}\pertZ + \pertZ\BAr -\sum_{k=1}^p\BM_k\pertX\BMkr=\Bzero,\\
        \label{eq:pertb_pertQ_sylv}
        \mbox{and}~~&\BAr^{\trans}\pertQ+ \pertQ\BAr +\sum_{k=1}^p\BMkr\pertP\BMkr=\Bzero.
    \end{align}
\end{subequations}
From~\eqref{eq:CJ_tracechar_Q} we may expand $\CJ(\Sysred+\pertSys)$ as
\begin{align*}
    \CJ(\Sysred+\pertSys)&=\CJ(\Sysred) + 2\trace\left(\BB^{\trans}\BZ\pertB + \BBr\BB^{\trans} \pertZ\right)\\
    + \trace\left(2\BBr^{\trans}\right.&\left.\BQr\pertB + \BBr\BBr^{\trans}\pertQ\right) + O(\|\pertB\|_{\frob}^2).
\end{align*}
We handle the terms in this expansion individually. First, $\trace\left(\BB^{\trans}\BZ\pertB + \BBr\BB^{\trans} \pertZ\right)$ splits into the individual terms $\trace\left(\BB^{\trans}\BZ\pertB\right)$ and $\trace\left(\BBr\BB^{\trans} \pertZ\right)$.
Using properties of the trace and applying~\Cref{lemma:trace_form} to equations~\eqref{eq:constraint_sylv_X} and~\eqref{eq:pertb_pertZ_sylv}, 
 we see that $\trace\left(\BBr\BB^{\trans} \pertZ\right)$ can be written
\begin{align*}
     \trace\left(\BBr\BB^{\trans} \pertZ\right) &= \trace\left(-\left(\sum_{k=1}^p\BMkr\pertX^{\trans}\BM_k\right)\BX\right)\\
     &=\trace\left(-\pertX\sum_{k=1}^p\BMkr\BX^{\trans}\BM_k\right)\\
     &=\trace\left(-\left(\BZ^{\trans}\BA+ \BZ^{\trans}\BAr^{\trans}\right)\pertX \right)+ \trace\left(\pertX\BCr^{\trans}\BC\right)\\
     &=\trace\left(-\BZ^{\trans}\left(\BA\pertX + \pertX\BAr^{\trans}\right)\right) + \trace\left(\pertX\BCr^{\trans}\BC\right)\\
     &=\trace\left(\BZ^{\trans}\BB\pertB^{\trans}\right)+ \trace\left(\pertX\BCr^{\trans}\BC\right)\\
     &=\trace\left(\BB^{\trans}\BZ\pertB\right)+ \trace\left(\pertX\BCr^{\trans}\BC\right).
\end{align*}
Applying~\Cref{lemma:trace_form} to equations~\eqref{eq:constraint_LTI_sylv_Z} and~\eqref{eq:pertb_pertX_sylv}, it follows that
\begin{align*}
    \trace\left(\pertX\BCr\BC^{\trans}\right)=\trace\left(\BCr\BC^{\trans}\pertX\right)&=\trace\left(-\pertB\BB^{\trans}\BZ_1\right)\\
    &=\trace\left(-\BB^{\trans}\BZ_1\pertB\right).
\end{align*}
So the term $2\trace\left(\BB^{\trans}\BZ\pertB + \BBr\BB^{\trans} \pertZ\right)$ in the expression of $\CJ(\Sysred+\pertSys)$ becomes
\begin{align*}
    2\trace\left(\BB^{\trans}\BZ\pertB + \BBr\BB^{\trans} \pertZ\right) &= 4\trace\left(\BB^{\trans}\BZ\pertB\right) -\\
    &~~~~~~~~~~2\trace\left(\BB^{\trans}\BZ_1\pertB\right).
\end{align*}
The term $\trace\left(2\BBr^{\trans}\BQr\pertB + \BBr\BBr^{\trans}\pertQ\right)$ in $\CJ(\Sys+\pertSys)$ can be dealt with using similar ideas to be written as
\begin{align*}
        \trace\left(2\BBr^{\trans}\BQr\pertB + \BBr\BBr^{\trans}\pertQ\right)&=4\trace\left(\BBr^{\trans}\BQr\pertB\right)\\
    -2\trace\left(2\BBr^{\trans}\right.&\left.\BQoner\pertB\right)+O(\|\pertB\|_{\frob}^2).
\end{align*}
The expression for $\CJ(\Sysred+\pertSys)$ in this case becomes
\begin{align*}
    \CJ(\Sysred+\pertSys) &= \CJ(\Sysred) + 4\trace\left(\BB^{\trans}\BZ\pertB\right) - 2\trace\left(\BB^{\trans}\BZ_1\pertB\right)\\
    +4\trace\left(\BBr^{\trans}\right.&\left.\BQr\pertB\right)
    -2\trace\left(\BBr^{\trans}\BQoner\pertB\right) + O(\|\pertB\|_{\frob}^2)\\
    =\CJ(\Sysred) +&\left\langle 2\left(\left(2\BQr-\BQoner\right)\BBr + \left(2\BZ^{\trans}-\BZ_1^{\trans}\right)\BB\right), \pertB\right\rangle_{\frob} \\
    &~~~~~~+ O(\|\pertB\|_{\frob}^2).
\end{align*}
Therefore, $\gradB = 2\left(\left(2\BQr-\BQoner\right)\BBr + \left(2\BZ^{\trans}-\BZ_1^{\trans}\right)\BB\right)$ as claimed in~\eqref{eq:gradB}.

Lastly, we compute $\gradA$ in~\eqref{eq:gradA}.
Consider an arbitrary infinitesimal perturbation $\pertA\in\Rrr$ to $\BAr$.
As was the case for the gradient with respect to $\BBr$, this first-order perturbation induces perturbations $\pertX$, $\pertZ\in\Rnr$, and $\pertP$, $\pertQ\in\Rrr$ in the solutions to~\eqref{eq:constraint_lyap_Pr} --~\eqref{eq:constraint_sylv_Z}. These satisfy
\begin{subequations}
\begin{align}
    \label{eq:pertA_pertX_sylv}
    &\BA\pertX+\pertX\BAr^{\trans}+\BX\pertA^{\trans}+O(\|\pertA\|_{\frob}^2)=\Bzero,\\
    \begin{split}
    \label{eq:pertA_pertP_sylv}
    &\BAr\pertP+\pertP\BAr^{\trans}+\pertA\BPr+\BPr\pertA^{\trans}\\
    &~~~~~+O(\|\pertA\|_{\frob}^2)=\Bzero,
    \end{split}
    \\
    \begin{split}
    \label{eq:pertA_pertZ_sylv}
    &\BA^{\trans}\pertZ+\pertZ\BAr+\BZ\pertA - \sum_{k=1}^p\BM_k\pertX\BMkr\\
    &~~~~~+O(\|\pertA\|_{\frob}^2)=\Bzero,
    \end{split}
    \\
    \begin{split}
    \label{eq:pertA_pertQ_sylv}
    \mbox{and}~~&\BAr^{\trans}\pertQ+\pertQ\BAr+\pertA^{\trans}\BQr+\BQr\pertA\\
    &~~~~~+\sum_{k=1}^p\BMkr\pertP\BMkr+O(\|\pertA\|_{\frob}^2)=\Bzero.
    \end{split}
\end{align}
\end{subequations}
By~\eqref{eq:CJ_tracechar_Q}, the error $\CJ(\Sysred+\pertSys)$ may be expanded as
\begin{align*}
    \CJ(\Sysred+\pertSys)=\CJ(\Sysred) + 2\trace\left(\BBr\BB^{\trans}\pertZ\right) + \trace\left(\BBr\BBr^{\trans}\pertQ\right).
\end{align*}
We deal with the terms in the above expansion individually as follows.
Applying~\Cref{lemma:trace_form} to equations~\eqref{eq:constraint_sylv_X} and~\eqref{eq:pertA_pertZ_sylv}, $\trace\left(\BBr\BB^{\trans} \pertZ\right)$ can be re-written as
\begin{align*}
     \trace\left(\BBr\BB^{\trans} \pertZ\right)&= \trace\left(-\left(\sum_{k=1}^p\BMkr\pertX^{\trans}\BM_k - \pertA^{\trans}\BZ^{\trans}\right)\BX\right)\\
     &~~~~~+O(\|\pertA\|_{\frob}^2)\\
     &= \trace\left(\BX^{\trans}\BZ\pertA\right) - \trace\left(\sum_{k=1}^p\BMkr\BX^{\trans}\BM_k\pertX\right)\\
     &~~~~~+O(\|\pertA\|_{\frob}^2).
\end{align*}
From~\eqref{eq:constraint_sylv_Z} and~\eqref{eq:pertA_pertX_sylv}, observe that
\begin{align*}
    \trace&\left(\sum_{k=1}^p\BMkr\BX^{\trans}\BM_k\pertX\right)=\trace\left(\left(\BZ^{\trans}\BA+\BAr^{\trans}\BZ^{\trans}\right)\pertX\right)\\
    &~~~~~\hspace{1.6in}-\trace\left(\BCr^{\trans}\BC\pertX\right)\\
    &=\trace\left(\BZ^{\trans}\left(\BA\pertX+\pertX\BAr^{\trans}\right)\right) -\trace\left(\BCr^{\trans}\BC\pertX\right)\\
    &=-\trace\left(\BZ^{\trans}\BX\pertA^{\trans}\right)-\trace\left(\BCr^{\trans}\BC\pertX\right)+O(\|\pertA\|_{\frob}^2).
\end{align*}
Applying~\Cref{lemma:trace_form} to~\eqref{eq:constraint_LTI_sylv_Z} and~\eqref{eq:pertA_pertX_sylv} allows us to simplify the $\trace\left(\BCr^{\trans}\BC\pertX\right)$ term further as
\begin{equation*}
    -\trace\left(\BCr^{\trans}\BC\pertX\right)=\trace\left(\pertA\BX^{\trans}\BZ_1\right).
\end{equation*}
So, the term appearing in the expansion of $\trace\left(\BBr\BB^{\trans}\pertZ\right)$ ultimately becomes
\begin{align*}
   -\trace\left(\sum_{k=1}^p\BMkr\BX^{\trans}\BM_k\pertX\right)&=\trace\left(\BX^{\trans}\BZ\pertA\right) \\
    -\trace\left(\BX^{\trans}\BZ_1\pertA\right)
    &+O(\|\pertA\|_{\frob}^2).
\end{align*}
In aggregate, the term $\trace\left(\BBr\BB^{\trans} \pertZ\right)$ in the expansion of $\CJ(\Sysred+\pertSys)$ becomes
\begin{align*}
    \trace\left(\BBr\BB^{\trans}\pertZ\right)&= 2\trace\left(\BX^{\trans}\BZ\pertA\right)  - \trace\left(\BX^{\trans}\BZ_1\pertA\right)\\
    &~~~~~+O(\|\pertA\|_{\frob}^2).
\end{align*}
Using a similar line of reasoning, the term $\trace\left(\BBr\BBr^{\trans}\pertQ\right)$ can be re-written as
\begin{align*}
        \trace\left(\BBr\BBr^{\trans}\pertQ\right)
        &=4\trace\left(\BPr\BQr\pertA\right)-2\trace\left(\BPr\BQoner\pertA\right)\\
        &~~~~~+ O(\|\pertA\|_{\frob}^2).
\end{align*}
Thus, the expansion $\CJ(\Sysred+\pertSys)$ simplifies to
\begin{align*}
    \CJ(\Sysred+\pertSys)&=\CJ(\Sysred)+ 4\trace\left(\BX^{\trans}\BZ\pertA\right) -2\trace\left(\BX^{\trans}\BZ_1\pertA\right)\\
    +4\trace\left(\BPr\BQr\right.&\left.\pertA\right)-2\trace\left(\BPr\BQoner\pertA\right)+O(\|\pertA\|_{\frob}^2) \\
    =\CJ(\Sysred) +&\left\langle 2\left(\left(2\BZ^{\trans}-\BZ_1^{\trans}\right)\BX + \left(2\BQr-\BQoner\right)\BPr\right) , \pertA\right\rangle_{\frob} \\
    &+ O(\|\pertA\|_{\frob}^2).
\end{align*}
Thus, $\gradA=  2\left(\left(2\BZ^{\trans}-\BZ_1^{\trans}\right)\BX + \left(2\BQr-\BQoner\right)\BPr\right)$. This completes the proof.


\addcontentsline{toc}{section}{References}
\bibliographystyle{spmpsci}
\bibliography{root}
\end{document}